\newtheorem{thm}{Theorem}[section]
\newtheorem{lemma}[thm]{Lemma}
\newtheorem{prop}[thm]{Proposition}
\newtheorem{proposition}[thm]{Proposition}
\renewcommand{\proofname}{Proof}
\newtheorem{corollary}[thm]{Corollary}
\theoremstyle{definition}
\newtheorem{remark}[thm]{Remark}
  \newtheorem{definition-remark}[thm]{Definition-Remark}
 \newtheorem{example}[thm]{Example}
\def\ker{\operatorname{ker}}
\def\cork{\operatorname{cork}}
\def\min{\operatorname{min}}
\def\c1{\operatorname{c_1}}
\def\c2{\operatorname{c_2}}
\def\Cliff{\operatorname{Cliff}}
\def\PP{{\mathbb P}}
\def\N{{\mathcal N}}
\def\O{{\mathcal O}}
\def\I{{\mathcal J}}
\def\E{{\mathcal E}}
\def\T{{\mathcal T}}
\def\F{{\mathcal F}}
\def\FF{{\mathbb F}}
\def\c{\mathfrak{c}}
\def\R{\mathcal{R}}
\def\x{\times}                   
\def\cong{\simeq}
\def\+{\oplus}               
\def\*{\otimes}                  
\def\Shom{\operatorname{ \mathfrak{h}\mathfrak{o}\mathfrak{m} }}
\def\Pic{\operatorname{Pic}}
\begin{document}

\title[Global sections of twisted normal bundles]{Global sections of twisted normal bundles of $K3$ surfaces and their hyperplane sections} 

\author{Andreas Leopold Knutsen}
\address{Andreas Leopold Knutsen, Department of Mathematics, University of Bergen, Postboks 7800,
5020 Bergen, Norway}
\email{andreas.knutsen@math.uib.no}



 \begin{abstract}  
 Let $S \subset \PP^g$ be a smooth $K3$ surface of degree $2g-2$, $g \geq 3$. We classify all the cases for which $h^0(\N_{S/\PP^g}(-2)) \neq 0$ and the cases for which $h^0(\N_{S/\PP^g}(-2)) < h^0(\N_{C/\PP^{g-1}}(-2))$ for $C \subset \PP^{g-1}$ a general canonical curve section of $S$.
 \end{abstract}

\maketitle

\section{Introduction}

The spaces of global sections of twists of normal bundles  of an embedded variety $X \subset \PP^n$ in projective space naturally occur in many ways, for instance in the deformation theory of the cone over $X$, cf. \cite{Pi}. More specifically, the spaces $H^0(\N_{X/\PP^n}(-k))$ for $k=1,2$ are related to extendability properties of $X$, as we now briefly recall. 

An {\it $r$-step extension} of a smooth variety $X \subset \PP^n$ is a projective variety $W \subset \PP^{n+r}$ so that $X$ is the transversal intersection of $W$ with a $\PP^n \subset \PP^{n+r}$. If $W$ is not a cone, then the extension is called {\it nontrivial}, and $X$ is called {\it $r$-extendable}. A famous theorem of Zak-Lvovski \cite{Zak,Lvov} states that if $X$ is not a quadric, and $h^0(\N_{X/\PP^n}(-1)) < \min\{n+1+r,2n+1\}$, then $X$ is not $r$-extendable. Quite remarkably, a converse of the theorem of Zak-Lvovski was recently obtained in \cite[Thms. 2.1 and 2.19]{cds} in the case of $X$ a canonical curve or a $K3$ surface, to the effect that $h^0(\N_{X/\PP^n}(-1)) \geq n+1+r$ is a {\it sufficient} condition for $r$-extendability, provided that the curve (respectively, any smooth hyperplane section of the surface) has genus at least $11$ and Clifford index at least $3$. 

Whereas $H^0(\N_{X/\PP^n}(-1))$ is connected with the {\it existence} of nontrivial $r$-extensions $X \subset W$, the space $H^0(\N_{X/\PP^n}(-2))$ is connected with {\it uniqueness}, as was proved by Wahl \cite[Thm. 1.9 and Thm. 2.8]{W-CY}: If 
$H^0(\N_{X/\PP^n}(-2))=0$, then $W$ is uniquely determined by its Kodaira-Spencer map (see \cite[(1.4)]{W-CY} for its definition), and if in addition the Kodaira-Spencer map is an isomorphism, then $W$ is {\it universal}, meaning that every extension of $X$ is equivalent to a (possibly trivial) cone over a unique subextension (see also \cite[\S 4]{cds}).

Much attention has been devoted to canonical curves and $K3$ surfaces. We refer to the recent works \cite{abs,cds}, and recall, as another instance, that considerations as above led to the proof that a curve of genus $g \geq 11$, $g \neq 12$ lying on a $K3$ surface, is generically contained in at most one such surface \cite{clm}.

In this paper we will concentrate on the computations of the spaces of global sections $H^0(\N_{S/\PP^g}(-k))$ for $k \geq 2$ in the case of $K3$ surfaces $S \subset \PP^g$ of degree $2g-2$. (The case of {\it general} $K3$ surfaces was treated in \cite{clm-class}.) Their hyperplane sections are canonical curves
$C \subset \PP^{g-1}$ of genus $g$ and we start by recalling the known results in this case, before stating our main result (Proposition \ref{prop:main2} below). The cohomology groups in question are related to the well-known gaussian maps
\[ \Phi_{\omega_C,\omega_C^{\*l}}: \ker \mu_{\omega_C,\omega_C^{\*l}} \longrightarrow H^0(\omega_C^{\*(l+1)}), \]
where $\ker \mu_{\omega_C,\omega_C^{\*l}}$ is the kernel of the multiplication map
\[ \mu_{\omega_C, \omega_C^{\*l}}: H^0(\omega_C) \* H^0(\omega_C^{\*l}) \to 
H^0(\omega_C^{\*(l+1)}),\] 
and the map $ \Phi_{\omega_C,\omega_C^{\*l}}$ is (essentially) defined by sending $\sigma \* \tau$ to $d\sigma \* \tau-\sigma \* d\tau$.  
We have (cf. \cite{W1} or \cite[Prop. 1.2]{CM1}):
\begin{eqnarray}
  \label{eq:gauss1}
  h^0(\N_{C/\PP^{g-1}}(-1)) & = & g + \cork\Phi_{\omega_C,\omega_C}, \\
  \label{eq:gauss2} h^0(\N_{C/\PP^{g-1}}(-j)) & = & 
\cork\Phi_{\omega_C,\omega_C^{\*j}} \; \; \mbox{for} \; \; j \geq 2. 
\end{eqnarray}

The study of $h^0(\N_{C/\PP^{g-1}}(-1))$, or equivalently, of the corank of the gaussian map $\Phi_{\omega_C,\omega_C}$, is a tricky question and a history of its own. We refer for instance to the works \cite{W2,W,W3,chm,CM1,CM2} and the very recent works on $K3$ surfaces \cite{abs,cds}. It is still an open question to determine the possible values of this for all curves, although the value is known for general curves and for a general curve of any fixed gonality. We will in the following concentrate on the dimensions $h^0(\N_{C/\PP^{g-1}}(-j))$ for $j \geq 2$ and restrict our attention to the cases $g \geq 5$, as otherwise the canonical model is a complete intersection and the cohomology groups can be easily 
calculated.

Recall the following well-known fact, cf. \cite[\S 2]{W2}, \cite[Lemma 2.7(ii)]{KL-gauss} or \cite[Lemma 3.5]{cds}:

\begin{lemma} \label{lemma:well-known}
  Let $X \subset \PP^n$ be a locally complete intersection variety such that the homogeneous ideal of $X$ is generated by quadrics and the first syzygy module is generated by linear syzygies. Then $h^0(\N_{X/\PP^n}(-k))=0$ for all $k \geq 2$. 
\end{lemma}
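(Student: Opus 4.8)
The plan is to read off $H^0(\N(-k))$ from the first steps of the minimal free resolution of the homogeneous ideal $I=I_X \subset R:=\CC[x_0,\dots,x_n]$. Since $X$ is a local complete intersection, the normal sheaf $\N:=\N_{X/\PP^n}$ is locally free and is the dual of the conormal sheaf $\N\v=\Ii_X/\Ii_X^2$ (where $\Ii_X$ denotes the ideal sheaf), so that $h^0(\N(-k))=\dim\Hom_{\O_X}(\Ii_X/\Ii_X^2,\O_X(-k))$. By hypothesis $I$ is generated by, say, $\beta_1$ quadrics and its first syzygies are linear, so the minimal free resolution of $I$ begins
\[ R(-3)^{\beta_2}\xrightarrow{\ M\ }R(-2)^{\beta_1}\longrightarrow I\longrightarrow 0, \]
with $M$ a matrix of linear forms. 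Sheafifying and restricting to $X$ (right exactness of $-\*\O_X$, together with $\Ii_X\*\O_X=\Ii_X/\Ii_X^2$) yields a presentation $\O_X(-3)^{\beta_2}\to\O_X(-2)^{\beta_1}\to\N\v\to 0$.

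Next I would dualize. Applying $\Shom_{\O_X}(-,\O_X(-k))$, which is left exact, and passing to global sections gives an exact sequence
\[ 0\longrightarrow H^0(\N(-k))\longrightarrow H^0(\O_X(2-k))^{\beta_1}\xrightarrow{\ \overline{M}^{\,t}\ }H^0(\O_X(3-k))^{\beta_2}, \]
the last arrow being induced by the transpose of $M$ restricted to $X$. For $k\geq 3$ the middle term vanishes, since $\O_X(1)$ is ample and $X$ is a (reduced, irreducible) variety, whence $H^0(\O_X(2-k))=0$ and we are done. The entire content of the lemma is therefore concentrated in the case $k=2$.

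For $k=2$ the sequence reads $0\to H^0(\N(-2))\to H^0(\O_X)^{\beta_1}\to H^0(\O_X(1))^{\beta_2}$, so it suffices to prove that the map $c=(c_1,\dots,c_{\beta_1})\mapsto c\,\overline{M}$ on $H^0(\O_X)^{\beta_1}=\CC^{\beta_1}$ is injective. Because $I$ is generated in degree $2$ we have $(I)_1=0$, hence $X$ is nondegenerate and $R_1\hookrightarrow H^0(\O_X(1))$; thus an element of the kernel is a constant vector $c$ with $cM=0$ in $R$, i.e.\ a \emph{constant left syzygy} of $M$. The main step is to exclude such a $c\neq 0$. The $R$-linear functional $\lambda\colon R(-2)^{\beta_1}\to R$, $e_i\mapsto c_i$, then satisfies $\lambda\circ M=0$, so it vanishes on $\im M=\ker(R(-2)^{\beta_1}\to I)$ and descends to a graded homomorphism $\mu\colon I\to R$ of degree $-2$ with $\mu(q_i)=c_i$. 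But since $R$ is normal and $\codim_{\PP^n}X\geq 2$ one has $\Hom_R(I,R)=R$, which has no nonzero element of negative degree; hence $\mu=0$, all $c_i=0$, and $H^0(\N(-2))=0$.

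The anticipated obstacle is exactly this last point, the vanishing of constant left syzygies: it is here that minimality of the resolution and the codimension of $X$ enter, and it is what forces one to work with the homogeneous ideal and its resolution rather than with the sheaf $\Ii_X$ alone. Equivalently, the class of $c$ lives in $\Ext^1_R(R/I,R)$ in degree $-2$, which vanishes because $\operatorname{grade}I=\codim X\geq 2$.
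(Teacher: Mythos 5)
The paper itself gives no proof of this lemma; it is quoted as well known, with references to Wahl, Knutsen--Lopez and Ciliberto--Dedieu--Sernesi. Your argument is essentially the standard one found in those sources: present the conormal module $\Ii_X/\Ii_X^2$ by the quadric generators and the linear syzygies, dualize, and rule out constant left syzygies of the presentation matrix via $\Hom_R(I,R)=R$. The steps are correct, and you correctly isolate the two points where the hypotheses really enter: nondegeneracy (so that a relation holding in $H^0(\O_X(1))$ already holds in $R_1$), and the fact that $\im M$ is the \emph{entire} first syzygy module (so that $\lambda$ genuinely descends to a degree $-2$ homomorphism $\mu\colon I\to R$). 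One caveat: your appeal to $\codim_{\PP^n}X\geq 2$ does not follow from the stated hypotheses --- a smooth quadric hypersurface is a locally complete intersection whose ideal is generated by a quadric with (vacuously) linear syzygies, yet $h^0(\N_{Q/\PP^n}(-2))=h^0(\O_Q)=1$ --- so the lemma implicitly excludes the codimension-one case, exactly as in the Zak--Lvovski setup, and all applications in the paper are in codimension at least two. Since your own analysis pinpoints $\Hom_R(I,R)=R$ (equivalently $\operatorname{grade}I\geq 2$) as the one place the argument could fail, it would be worth stating that exclusion explicitly rather than asserting the codimension bound.
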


An immediate consequence of this, together with Petri's theorem and  results on syzygies of tetragonal curves by Schreyer \cite{sc} and Voisin \cite{Vo}, is the following well-known fact that can also be deduced from \cite[Thm. 2]{BEL} and \eqref{eq:gauss2}:

\begin{corollary} \label{cor:well-knowncancurve}
  Let $C \subset \PP^{g-1}$ be a canonically embedded (nonhyperelliptic) curve of genus $g \geq 3$. If $\Cliff C \geq 3$, then $h^0(\N_{C/\PP^{g-1}}(-k))=0$ for all $k \geq 2$.
\end{corollary}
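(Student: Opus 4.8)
The plan is to reduce Corollary \ref{cor:well-knowncancurve} to Lemma \ref{lemma:well-known} by verifying that a canonical curve $C \subset \PP^{g-1}$ with $\Cliff C \geq 3$ satisfies the two hypotheses of that lemma: that its homogeneous ideal is generated by quadrics, and that the first syzygy module among those quadrics is generated by linear syzygies. Since $C$ is a smooth curve, it is automatically a local complete intersection, so that hypothesis is free. Thus the whole content is to establish the two syzygy statements, and these are precisely the statements governed by the Green--Lazarsfeld conditions $N_1$ and $N_2$ (or equivalently $(N_2)$ in the convention where $(N_p)$ for $p \geq 1$ encodes quadric generation plus linear syzygies up to step $p$).

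First I would invoke Petri's theorem, which guarantees that the homogeneous ideal of a nonhyperelliptic canonical curve is generated by quadrics, with the only exceptions being trigonal curves and smooth plane quintics (the genus $6$ case); in both exceptional cases the ideal requires a cubic generator. The hypothesis $\Cliff C \geq 3$ excludes exactly these cases: a trigonal curve has $\Cliff C = 1$, and a smooth plane quintic has $\Cliff C = 2$. Hence under $\Cliff C \geq 3$ the ideal $I_C$ is generated by quadrics, securing the first hypothesis of Lemma \ref{lemma:well-known}.

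Second I would turn to the linearity of the first syzygies, which is condition $(N_2)$. The key input here is the classification of low-Clifford-index behaviour by Schreyer \cite{sc} and Voisin \cite{Vo}: for curves of Clifford index at least $3$ the first syzygy module is generated in the lowest degree, i.e.\ by linear syzygies, the obstructions being again the geometrically special curves (trigonal, plane quintic, and the tetragonal/bielliptic-type curves with $\Cliff C \leq 2$) which get ruled out by the hypothesis. Concretely, for tetragonal curves Schreyer and Voisin describe the shape of the minimal free resolution and show that the deviation from linear first syzygies is controlled by $\Cliff C$; once $\Cliff C \geq 3$ the relevant extra generator disappears and the syzygies among the quadrics are linear. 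With both hypotheses of Lemma \ref{lemma:well-known} verified, I conclude $h^0(\N_{C/\PP^{g-1}}(-k))=0$ for all $k \geq 2$, which is exactly the assertion of the corollary; alternatively, as the excerpt notes, one can combine the Green--Lazarsfeld $(N_2)$ vanishing with the formula \eqref{eq:gauss2} relating these cohomology groups to coranks of the gaussian maps, or invoke \cite[Thm. 2]{BEL} directly.

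The main obstacle is not any hard computation but rather correctly pinning down the boundary of the classification: one must be certain that $\Cliff C \geq 3$ excludes \emph{every} curve whose resolution fails to be linear at the first syzygy step, and that no further exceptional families intrude. This is exactly where the precise Schreyer--Voisin analysis of tetragonal (and trigonal and plane-quintic) curves is indispensable, since it is these extremal families that realize the failure of $(N_2)$ and whose Clifford indices $1$ and $2$ sit just below the threshold; verifying that the list is complete and that $\Cliff C \geq 3$ is exactly the cut-off is the delicate point of the argument.
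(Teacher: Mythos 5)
Your proposal is correct and follows exactly the route the paper intends: reduce to Lemma \ref{lemma:well-known} by securing quadric generation from Petri's theorem (the exceptions being trigonal curves and plane quintics, which have Clifford index $1$ and $2$) and linear first syzygies (property $N_2$) from the Schreyer--Voisin results, with the hypothesis $\Cliff C \geq 3$ ruling out precisely the exceptional families. The paper gives no further detail than this, so there is nothing to add.
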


Secondly, as the gaussian maps $\Phi_{\omega_C,\omega_C^{\*l}}$ are well-known to be surjective for $l \geq 3$ and $g \geq 5$, cf., e.g.,  \cite[Cor. 2.10 and Prop. 2.11]{KL-gauss} for a proof, we have by \eqref{eq:gauss2} that
\begin{equation} \label{eq:Ckgeq3}
h^0(\N_{C/\PP^{g-1}}(-k))=0 \; \; \mbox{for all} \; \;k \geq 3 \; \; \mbox{(when $g \geq 5$)}.
\end{equation}

The cases left are the cases $k=2$ for curves of Clifford indices one and two, that is, trigonal and tetragonal curves, as well as curves isomorphic to smooth plane quintics or sextics. The possible values have been computed  in various works, cf. \cite{DS,Te,BEL,CM2}, although a complete statement seems to be missing in the literature. We give all possible values of $h^0(\N_{C/\PP^{g-1}}(-2))$, for $k \geq 2$, for such curves, in Propositions \ref{prop:c=1}, \ref{prop:c=1SPQ},
\ref{prop:c=2} and \ref{prop:c=2SPS} and remark that all the possible values actually do occur (more precisely, for curves on $K3$ surfaces, cf. Remark \ref{rem:SMP}, Example \ref{ex:genere7} and Proposition \ref{prop:main2}). In particular, we see that 
\begin{equation} \label{eq:Ck2}
h^0(\N_{C/\PP^{g-1}}(-2))=0 \; \; \mbox{if $g \geq 11$ and $C$ is not bielliptic}.
\end{equation}

Since hyperplane sections of $K3$ surfaces are canonical curves, an immediate consequence of Lemma \ref{lemma:well-known}, 
together with Green's hyperplane section theorem on syzygies \cite[Thm. 3.b.7]{gr}, is the following fact, also well-known.
Recall that 
the Clifford index is constant among all smooth curves in a complete linear system on a $K3$ surface, cf. \cite{gl}. 

\begin{corollary} \label{cor:well-knownK3}
  Let $S \subset \PP^g$ be a (possibly singular) projective model of a  $K3$ surface of degree $2g-2$. Let $c$ be the Clifford index of the smooth hyperplane sections of $S$. 

If $c \geq 3$, then $h^0(\N_{S/\PP^g}(-k))=0$ for all $k \geq 2$.
\end{corollary}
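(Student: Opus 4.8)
The plan is to reduce the statement for the surface $S \subset \PP^g$ to the known statement for its smooth hyperplane section $C \subset \PP^{g-1}$, which is a canonical curve of genus $g$ and Clifford index $c$ by the constancy of the Clifford index in a complete linear system on a $K3$ surface \cite{gl}. Since we assume $c \geq 3$, Corollary \ref{cor:well-knowncancurve} already gives $h^0(\N_{C/\PP^{g-1}}(-k)) = 0$ for all $k \geq 2$, so the entire task is to transport this vanishing from the curve $C$ up to the surface $S$. The key input is that, under the hypothesis $c \geq 3$, Petri's theorem together with the syzygy results of Schreyer \cite{sc} and Voisin \cite{Vo} (as already invoked in the derivation of Corollary \ref{cor:well-knowncancurve}) guarantee that the homogeneous ideal of $C \subset \PP^{g-1}$ is generated by quadrics and that its first syzygy module is generated by linear syzygies; in the language of Green's theory this is the statement that $C$ satisfies property $N_2$.

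Next I would apply Green's hyperplane section theorem on syzygies \cite[Thm. 3.b.7]{gr} in the reverse direction: because a general hyperplane section $C$ of $S$ satisfies property $N_2$, the surface $S \subset \PP^g$ itself satisfies property $N_2$. Concretely, Green's theorem relates the Koszul cohomology groups $K_{p,q}$ of $S$ with those of $C$, and the vanishing of the relevant Koszul groups for $C$ (which is exactly what ``ideal generated by quadrics and syzygies generated by linear syzygies'' encodes) lifts to the corresponding vanishing for $S$. Thus I would conclude that the homogeneous ideal of $S \subset \PP^g$ is generated by quadrics and its first syzygy module is generated by linear syzygies.

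With this structural statement in hand, the conclusion is immediate from Lemma \ref{lemma:well-known}: a smooth $K3$ surface is in particular a locally complete intersection subvariety of $\PP^g$, so applying the lemma to $X = S$ gives $h^0(\N_{S/\PP^g}(-k)) = 0$ for all $k \geq 2$, as desired. If one wishes to allow $S$ to be a (possibly singular) projective model, one checks that it remains a local complete intersection with the same ideal-theoretic properties, or one works on the minimal resolution and uses that the $N_2$ property is what the Koszul-cohomology statement controls; either way Lemma \ref{lemma:well-known} applies.

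The main obstacle, and the only genuinely substantive point, is verifying that Green's hyperplane section theorem can indeed be used in this ``lifting'' direction rather than merely the more familiar ``restriction'' direction. The restriction direction (syzygies of $S$ restrict to syzygies of $C$) is automatic, but here we need the converse implication that the $N_2$ property for the general hyperplane section forces $N_2$ for $S$; this is exactly the content of Green's theorem \cite[Thm. 3.b.7]{gr}, which shows the Koszul cohomology of $S$ is controlled by that of $C$ once one accounts for the hyperplane-section long exact sequence, and one must confirm that the hypotheses of that theorem (in particular projective normality of $S$, which holds for $K3$ surfaces of degree $2g-2$) are met. Once this is granted, the argument is a clean concatenation of the cited results, with no delicate estimates required.
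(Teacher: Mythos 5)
Your proof is correct and follows essentially the same route the paper intends: the paper derives this corollary precisely as an ``immediate consequence'' of Lemma \ref{lemma:well-known} combined with Green's hyperplane section theorem \cite[Thm. 3.b.7]{gr}, using that a canonical curve of Clifford index at least $3$ satisfies property $N_2$ by Petri, Schreyer and Voisin, and that rational double points are hypersurface singularities so the (possibly singular) model is still a local complete intersection. You have simply spelled out the steps the paper leaves implicit, including the correct observation that the substantive input is the ``lifting'' direction of Green's theorem, whose hypotheses are met by the projective normality of $K3$ models.
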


Here, by a projective model of a $K3$ surface of degree $2g-2$ in $\PP^g$, we mean the image under a 
birational morphism $\varphi_H$ defined by a complete linear system $|H|$ on a smooth $K3$ surface, where $H \in \Pic S$. As is well-known, $H^2=2(g-1)$, the general members of such a linear system are smooth, nonhyperelliptic curves of genus $g$, and the morphism  $\varphi_H$ is an isomorphism except for the possible contraction of (chains of) smooth rational curves. The image surface is normal, with at most isolated, rational double points as singularities,  cf. \cite{SD}. 

It is very easy to see, cf. Lemma \ref{lemma:primo}, that
\[ h^0(\N_{S/\PP^g}(-k)) \leq \sum_{j=0}^{\infty} h^0(\N_{C/\PP^{g-1}}(-k-j)).\]
Hence immediate consequences of \eqref{eq:Ckgeq3} are
\begin{equation} \label{eq:Skgeq3}
h^0(\N_{S/\PP^{g}}(-k))=0 \; \; \mbox{for all} \; \;k \geq 3 \; \; \mbox{(when $g \geq 5$)}
\end{equation}
and
\begin{equation} \label{eq:SCk2}
h^0(\N_{S/\PP^{g}}(-2)) \leq  h^0(\N_{C/\PP^{g-1}}(-2)) \; \; \mbox{(when $g \geq 5$)}
\end{equation}
(again for a possibly singular projective model $S \subset \PP^g$ of a  $K3$ surface).

Moreover, as there are no bielliptic curves of genus $g \geq 11$ on a $K3$ surface by a result of Reid's \cite[Cor. 2]{Reid}, an immediate consequence of
 \eqref{eq:Ck2} is that
\begin{equation} \label{eq:Sk2}
h^0(\N_{S/\PP^{g}}(-2))=0 \; \; \mbox{if $g \geq 11$}.
\end{equation}
(This was already implicitly contained in \cite[Pf. of Thm. 3.2]{clm-class}.)
The main result of this paper gives an explicit classification of all {\it smooth} projective models of $K3$ surfaces such that $h^0(\N_{S/\PP^{g}}(-2)) \neq 0$:

\begin{proposition} \label{prop:main2}
  Let $S \subset \PP^g$ be a smooth  $K3$ surface of degree $2g-2$, with $g \geq 5$. 

 If $g=5$, then $h^0(\N_{S/\PP^5}(-2))=3$, and if $g=6$, then $h^0(\N_{S/\PP^6}(-2))=1$.

 If $g \geq 7$, then $h^0(\N_{S/\PP^g}(-2))=0$ except for the following cases, where  $h^0(\N_{S/\PP^g}(-2))=1$:
\begin{itemize}
\item[(I)] $g=7$  and 
$\O_S(1) \sim 3E+\Gamma_1+\Gamma_2+\Gamma_3$, where $|E|$ is an elliptic 
pencil of degree three on $S$ and $\Gamma_1,\Gamma_2,\Gamma_3$ are disjoint lines (with $\Gamma_i \cdot E=1$, $i=1,2,3$). 
\item[(II)] $g=7$  and there are  
three elliptic pencils $|E_i|$ on $S$, $i=1,2,3$, such that $E_i \cdot E_j=2$ for $i \neq j$ and 
$\O_S(1) \sim E_1+E_2+E_3$. 
\item[(III)] $g=7$  and there is a globally generated line bundle $D$ on $S$ satisfying $D^2=2$ and $D \cdot H=6$. 
\item[(IV)] $g=8$ and there is a globally generated line bundle $D$ on $S$ satisfying $D^2=2$ and $D \cdot H=6$. 
\item[(V)] $g=9$ and $H \sim 2D$ with $D^2=4$. 
\item[(VI)] $g=9$ and $H \sim 3E+2\Delta$, where $|E|$ is an elliptic pencil and $\Delta$ is an effective divisor such that $\Delta^2=-2$ and $\Delta \cdot E=2$. 
\item[(VII)] $g=10$ and $H \sim 3D$ with $D^2=2$.
\end{itemize}
\end{proposition}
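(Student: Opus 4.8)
The plan is to combine the two inequalities already in hand to cut the problem down to a short list of surfaces, and then to pin down the exact value by a lifting argument carried out on $S$ itself. First I would invoke \eqref{eq:Sk2} to dispose of $g\geq 11$ and Corollary \ref{cor:well-knownK3} to assume that the Clifford index $c$ of the hyperplane sections is at most $2$; together with \eqref{eq:Ckgeq3} this confines attention to $5\leq g\leq 10$ and to curves $C$ that are trigonal, tetragonal, or carry a plane-curve net. The cases $g=5$ and $g=6$ I would treat by hand through the explicit projective model: for $g=5$ a smooth $S$ is a complete intersection of three quadrics (or, in the trigonal subcase, lies on a rational normal scroll), so $\N_{S/\PP^5}(-2)$ is computed directly and yields $h^0=3$; for $g=6$ one uses the explicit model (a quadric section of the quintic Fano threefold $V_5=G(2,5)\cap\PP^6$ in the Clifford index $2$ case, and the scroll or plane-quintic models otherwise) to obtain $h^0=1$.

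For $7\leq g\leq 10$ the engine is the restriction sequence
\[ 0 \longrightarrow \N_{S/\PP^g}(-3) \longrightarrow \N_{S/\PP^g}(-2) \longrightarrow \N_{C/\PP^{g-1}}(-2) \longrightarrow 0, \]
coming from the transversality isomorphism $\N_{S/\PP^g}(-2)|_C\cong \N_{C/\PP^{g-1}}(-2)$. Since $h^0(\N_{S/\PP^g}(-3))=0$ by \eqref{eq:Skgeq3}, passing to cohomology gives
\[ 0 \longrightarrow H^0(\N_{S/\PP^g}(-2)) \longrightarrow H^0(\N_{C/\PP^{g-1}}(-2)) \stackrel{\delta}{\longrightarrow} H^1(\N_{S/\PP^g}(-3)), \]
so that $h^0(\N_{S/\PP^g}(-2)) = h^0(\N_{C/\PP^{g-1}}(-2)) - \rank\delta$. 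The problem thus splits into (a) determining, for the general $C\in|H|$, the value $h^0(\N_{C/\PP^{g-1}}(-2))$ — which is read off from Propositions \ref{prop:c=1}, \ref{prop:c=1SPQ}, \ref{prop:c=2} and \ref{prop:c=2SPS} once the Clifford type of $C$ is identified — and (b) computing the rank of the connecting map $\delta$, i.e. deciding which sections on the curve lift to the surface.

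For (a) I would translate the low Clifford index of $C$ into Picard-lattice data on $S$ by the theorem of Green--Lazarsfeld \cite{gl}: the Clifford index of $C\in|H|$ is computed by a line bundle (the Clifford divisor) on $S$, which for each $g\in\{7,8,9,10\}$ produces a finite list of numerical configurations (an elliptic pencil with $E\cdot H=3$ or $4$, a net $D$ with $D^2=2$, a bielliptic-type configuration, or the Veronese-type cases $H\sim 2D$, $3D$). Since $h^0(\N_{C/\PP^{g-1}}(-2))$ is positive only in bounded genus, only finitely many of these configurations need be examined, and for each I would identify the projective geometry of $S$ (a surface on a rational normal scroll, a double cover of $\PP^2$, or a $2$- or $3$-uple-type model) and exhibit the relevant quadric generators and syzygies.

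The main obstacle, and the heart of the matter, is step (b): the inequality \eqref{eq:SCk2} is in general strict, so even when $h^0(\N_{C/\PP^{g-1}}(-2))>0$ one must still compute $\rank\delta$. The delicate feature is that this rank depends on the fine Picard structure of $S$ and not merely on the Clifford type of $C$: for instance a general trigonal $K3$ of genus $7$ should have $\delta$ injective, whence $h^0(\N_{S/\PP^g}(-2))=0$, whereas the three disjoint lines $\Gamma_1,\Gamma_2,\Gamma_3$ present in configuration (I) produce exactly one liftable section. I would therefore analyze $\delta$ case by case, either by describing $H^1(\N_{S/\PP^g}(-3))$ and the image of $\delta$ through the explicit model of $S$, or equivalently by checking directly which quadric relations found on $C$ extend over the whole surface. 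This is precisely what singles out the seven configurations (I)--(VII), shows the value is $1$ there, and forces it to vanish otherwise.
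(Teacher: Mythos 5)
Your reduction to $5 \leq g \leq 10$ and Clifford index $c \leq 2$, and your treatment of $g=5,6$ by explicit models, match the paper. But the engine you propose for $7 \leq g \leq 10$ --- the sequence $0 \to \N_{S/\PP^g}(-3) \to \N_{S/\PP^g}(-2) \to \N_{C/\PP^{g-1}}(-2) \to 0$ and the formula $h^0(\N_{S/\PP^g}(-2)) = h^0(\N_{C/\PP^{g-1}}(-2)) - \rank\delta$ --- only restates the problem. Everything hinges on $\rank\delta$, i.e.\ on $H^1(\N_{S/\PP^g}(-3))$ and the image of the connecting map, and you give no method to compute either; ``checking directly which quadric relations found on $C$ extend over the whole surface'' is the statement of the difficulty, not its resolution. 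The cases that distinguish, say, the two scroll types $(3,1,1)$ and $(2,2,1)$ for $g=7$, $c=1$ (both have $h^0(\N_{C/\PP^6}(-2))=1$, but only the first has $h^0(\N_{S/\PP^7}(-2))=1$) require a concrete criterion on $S$, and your proposal stops exactly where that criterion would have to appear. This is a genuine gap.

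The paper avoids the lifting problem altogether by working on $S$ from the start: in each case $S$ sits inside an auxiliary variety $T$ (a rational normal scroll swept out by an elliptic pencil, a Del Pezzo threefold, a Veronese variety or a cone over one) satisfying property $N_2$, so Lemma \ref{lemma:well-known2} kills $h^0(\N_{T/\PP^g}|_S(-2))$ and the normal bundle sequence of $S \subset T \subset \PP^g$ gives $h^0(\N_{S/\PP^g}(-2)) = h^0(\N_{S/T}(-2))$, an explicit line bundle on $S$ (e.g.\ $\O_S(H-(g-4)E)$ in the trigonal case, $\O_S(-b_2 E)$ in the tetragonal scroll case). Its $h^0$ is then computed by lattice-theoretic arguments in $\Pic S$; this is exactly what produces the dichotomies (effectivity of $H-3E$, the value of $b_2$, and so on) that single out (I)--(VII). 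One case ($g=9$, a Clifford divisor with $D^2=2$, $H$ neither $2$-divisible nor of the form $3E+2\Delta$) resists this and is handled by the separate identity $h^0(\N_{S/\PP^g}(-2)) = h^1(\T_S(-2))$ of Lemma \ref{lemma:trick} together with an Euler-sequence computation for the embedding of $S$ by $|H-D|$ into $\PP^4$. If you want to salvage your route, you would have to supply, for each configuration, an actual computation of $\delta$ --- at which point you would in effect be redoing the paper's ambient-variety argument.
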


We remark that the statement for $g=5$ and $6$  is of course well-known for {\it general} $K3$ surfaces, more precisely, with the Clifford index of all smooth hyperplane sections $c=2$, as they are complete intersections of three quadrics for $g=5$ and 
 quadratic  sections of a (possibly singular) quintic Del Pezzo threefold  in $\PP^6$. It is however new for $K3$ surfaces with $c=1$, at least as far as we know. We also remark that $c=1$ in (I) and $c=2$ in (II)-(VII).

A general surface $S$ in each of the cases is of the following form, and conversely all surfaces below belong to the cases listed in the proposition (in particular, all cases do occur): 
\begin{itemize}
\item[(I)] $S$ 
lies in a three-dimensional rational normal scroll $T$ of type 
$(3,1,1)$ in $\PP^7$ as a 
a divisor in $|\left(\O_T(1)(-\F)\right)^{\*3}|$, where $\F$ is the class of the ruling of $T$.
\item[(II)] $S$ is a 
 quadratic section of the sextic Del Pezzo threefold $T \cong \PP^1 \x \PP^1 \x \PP^1$ in its Segre embedding in $\PP^7$.
\item[(III)] $S$ is a quadratic section of the sextic Del Pezzo threefold $W$ in $\PP^7$ that is a divisor of bidegree $(1,1)$ in $\PP^2 \x \PP^2$. 
\item[(IV)] $S$ is a quadratic section of a blow up of $\PP^3$ at a point embedded in $\PP^8$ by the linear system of quadrics through the point (a septic Del Pezzo threefold). 
\item[(V)] $S$ is the $2$-Veronese embedding of a quartic in $\PP^3$, and thus a quadratic section of the $2$-Veronese embedding of $\PP^3$ in $\PP^{9}$.
\item[(VI)] $S$ is a 
 quadratic section of the cone over the anticanonical embedding of the Hirzebruch surface $\FF_1$ in $\PP^8$.    
\item[(VII)] $S$ is a quadratic section of the cone  over the Veronese surface in $\PP^9$.
\end{itemize}

Finally, in Proposition \ref{prop:main3}, we classify the cases for which the strict inequality 
 $h^0(\N_{S/\PP^{g}}(-2)) < h^0(\N_{C/\PP^{g-1}}(-2))$ holds, again in the case of {\it smooth} projective models.

\vspace{0.4cm} {\it Acknowledgements.} This paper grew out of my interest in the recent paper \cite{cds}. I thank C.~Ciliberto, T.~Dedieu and E.~Sernesi for the many conversations on this topic, and in particular, C.~Ciliberto for encouraging me to write down these results. I also thank A.~F.~Lopez for useful conversations and for indicating several references, as well as the referee for detecting various misprints.  

I have been partially supported by grant n. 261756 of
the Research Council of Norway and by the Bergen Research Foundation.

\section{Some useful results}

The following result was already mentioned in the introduction:

\begin{lemma} \label{lemma:primo}
  Let $X \subset \PP^n$ be a local complete intersection  surface with isolated singularities. Then, for any smooth hyperplane section $C \subset X$, we have
\[
h^0(\N_{X/\PP^n}(-k)) \leq \sum_{j=0}^{\infty} h^0(\N_{C/\PP^{n-1}}(-k-j)).
\]
\end{lemma}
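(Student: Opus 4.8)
The plan is to restrict $\N_{X/\PP^n}$ to the hyperplane section $C$ and then run a recursion on the twist. Since $X$ is a local complete intersection, $\N_{X/\PP^n}$ is locally free of rank $n-2$; and since $\Sing X$ is a finite set while $C$ is smooth, $C$ avoids $\Sing X$, so $X$ is smooth along $C$ and the linear form $\ell$ cutting out the hyperplane $H=\PP^{n-1}\supset C$ is a non-zerodivisor on the Cohen--Macaulay surface $X$. Thus $0\to \O_X(-1)\xrightarrow{\ \ell\ }\O_X\to \O_C\to 0$ is exact, and tensoring with the locally free sheaf $\N_{X/\PP^n}(-k)$ keeps it exact:
\[
0\longrightarrow \N_{X/\PP^n}(-k-1)\longrightarrow \N_{X/\PP^n}(-k)\longrightarrow \N_{X/\PP^n}(-k)|_C\longrightarrow 0.
\]
Taking global sections gives the basic inequality
\[
h^0(\N_{X/\PP^n}(-k))\ \leq\ h^0(\N_{X/\PP^n}(-k-1))+h^0\big(\N_{X/\PP^n}(-k)|_C\big).
\]

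The crux is the identification $\N_{X/\PP^n}|_C\cong \N_{C/\PP^{n-1}}$, which I would obtain by comparing conormal sheaves along the two chains $C\subset X\subset \PP^n$ and $C\subset \PP^{n-1}\subset \PP^n$. Because $X$ is lci and $C$ is a smooth (hence transverse) section avoiding $\Sing X$, both associated conormal sequences are short exact, with common middle term $\N^{\vee}_{C/\PP^n}$:
\[
0\to \N^{\vee}_{X/\PP^n}|_C\to \N^{\vee}_{C/\PP^n}\to \N^{\vee}_{C/X}\to 0,
\]
\[
0\to \N^{\vee}_{\PP^{n-1}/\PP^n}|_C\to \N^{\vee}_{C/\PP^n}\to \N^{\vee}_{C/\PP^{n-1}}\to 0,
\]
where $\N^{\vee}_{C/X}\cong \O_C(-1)\cong \N^{\vee}_{\PP^{n-1}/\PP^n}|_C$. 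Locally writing $X=V(f_1,\dots,f_{n-2})$ and $H=V(\ell)$, the classes $df_1,\dots,df_{n-2},d\ell$ freely generate $\N^{\vee}_{C/\PP^n}$, the $df_i$ generating $\N^{\vee}_{X/\PP^n}|_C$ and $d\ell$ generating $\N^{\vee}_{\PP^{n-1}/\PP^n}|_C$. Hence the composite $\N^{\vee}_{X/\PP^n}|_C\hookrightarrow \N^{\vee}_{C/\PP^n}\twoheadrightarrow \N^{\vee}_{C/\PP^{n-1}}$ is an isomorphism of rank-$(n-2)$ bundles; dualizing yields $\N_{X/\PP^n}|_C\cong \N_{C/\PP^{n-1}}$. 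Since $\O_{\PP^n}(1)|_C=\O_{\PP^{n-1}}(1)|_C$, twisting gives $\N_{X/\PP^n}(-k)|_C\cong \N_{C/\PP^{n-1}}(-k)$, so the basic inequality becomes
\[
h^0(\N_{X/\PP^n}(-k))\ \leq\ h^0(\N_{C/\PP^{n-1}}(-k))+h^0(\N_{X/\PP^n}(-k-1)).
\]

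To finish, I would iterate this inequality $m$ times to obtain
\[
h^0(\N_{X/\PP^n}(-k))\ \leq\ \sum_{j=0}^{m}h^0(\N_{C/\PP^{n-1}}(-k-j))+h^0(\N_{X/\PP^n}(-k-m-1)),
\]
and then let $m\to\infty$: since $\O_X(1)$ is very ample, the locally free sheaf $\N_{X/\PP^n}(-k-m-1)$ has no nonzero global sections for $m\gg 0$, so the remainder term vanishes and the stated bound follows. I expect the only genuine obstacle to be the identification $\N_{X/\PP^n}|_C\cong \N_{C/\PP^{n-1}}$, i.e.\ verifying that the two conormal sequences are short exact and split off the common $\O_C(-1)$ complementarily; everything here rests on $X$ being lci (so the conormal sheaves are locally free) together with the transversality and smoothness of $C$, which is exactly why these hypotheses are in force.
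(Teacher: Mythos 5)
Your proof is correct and follows essentially the same route as the paper: the identification $\N_{X/\PP^n}|_C\cong\N_{C/\PP^{n-1}}$ combined with the twisted restriction sequences $0\to\N_{X/\PP^n}(-k-1-j)\to\N_{X/\PP^n}(-k-j)\to\N_{X/\PP^n}(-k-j)\otimes\O_C\to 0$, iterated over $j$. You merely supply details the paper leaves implicit (the conormal-sequence verification of the identification and the vanishing of the remainder term for large twists), both of which are handled correctly.
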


\begin{proof}
  By assumption, $X$ has smooth hyperplane sections. For any such $C$, we have
$\N_{C/\PP^{n-1}} \cong \N_{X/\PP^n} \* \O_C$.
The exact sequences
\[ 
0 \longrightarrow \N_{X/\PP^n} (-k-1-j) \longrightarrow  \N_{X/\PP^n} (-k-j) \longrightarrow  
\N_{X/\PP^n}(-k-j) \* \O_C \longrightarrow  0
\]
thus yield the desired result.
\end{proof}

We will need the following strengthening of Lemma \ref{lemma:well-known}, proved in \cite[Lemma 2.7]{KL-gauss}:

\begin{lemma} \label{lemma:well-known2}
  Let $Y \subset \PP^n$ be an integral subvariety such that the homogeneous ideal of $Y$ is generated by quadrics and the first syzygy module is generated by linear syzygies (e.g., $Y$ satisfies property $N_2$). Let $X \subset Y$ be a smooth irreducible nondegenerate subvariety.

Then $h^0(\Shom_{\O_{\PP^n}}(\I_{Y/\PP^n},\O_X)(-2))=0$.
\end{lemma}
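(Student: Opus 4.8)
The plan is to read $\Shom_{\O_{\PP^n}}(\I_{Y/\PP^n},\O_X)(-2)$ off the beginning of a free presentation of the ideal sheaf and then take global sections, exploiting that $X$ is irreducible and nondegenerate. By hypothesis $\I_{Y/\PP^n}$ is generated by quadrics and its first syzygies are linear, so there is an exact sequence
\[ \O_{\PP^n}(-3)^{\oplus b} \xrightarrow{\ M\ } \O_{\PP^n}(-2)^{\oplus a} \xrightarrow{\ \alpha\ } \I_{Y/\PP^n} \to 0, \]
where $M=(m_{ij})$ is a matrix of linear forms and $\alpha$ sends the standard generators $e_1,\dots,e_a$ to a system $q_1,\dots,q_a$ of quadric generators. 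Applying the left-exact contravariant functor $\Shom_{\O_{\PP^n}}(-,\O_X(-2))$, and using $\Shom(\F,\O_X)(-2)=\Shom(\F,\O_X(-2))$ together with $\Shom(\O_{\PP^n}(-2),\O_X(-2))=\O_X$ and $\Shom(\O_{\PP^n}(-3),\O_X(-2))=\O_X(1)$, I would obtain
\[ 0 \to \Shom(\I_{Y/\PP^n},\O_X)(-2) \to \O_X^{\oplus a} \xrightarrow{\ M^{t}\ } \O_X(1)^{\oplus b}, \]
in which $M^{t}$ is the transpose matrix of restricted linear forms $m_{ij}|_X$.

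Next I would pass to global sections. Since $H^0$ is left exact and $X$ is irreducible (so $H^0(\O_X)=\CC$), this identifies $H^0(\Shom(\I_{Y/\PP^n},\O_X)(-2))$ with the space of constant vectors $(c_1,\dots,c_a)\in\CC^a$ satisfying $\sum_i c_i\, m_{ij}|_X=0$ in $H^0(\O_X(1))$ for every $j$. Here the nondegeneracy of $X$ enters decisively: the restriction $H^0(\O_{\PP^n}(1))\to H^0(\O_X(1))$ is injective, so these vanishings are equivalent to $\sum_i c_i\, m_{ij}=0$ in $H^0(\O_{\PP^n}(1))$ for all $j$, i.e. to $\lambda_c\circ M=0$, where $\lambda_c\colon\O_{\PP^n}(-2)^{\oplus a}\to\O_{\PP^n}(-2)$ is contraction against $(c_i)$.

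I would then reinterpret this condition intrinsically. A relation $\lambda_c\circ M=0$ means $\lambda_c$ kills $\im M=\ker\alpha$, hence factors through $\alpha$ as a homomorphism $\bar\lambda_c\colon\I_{Y/\PP^n}=\coker M\to\O_{\PP^n}(-2)$; conversely any $\psi\in\Hom_{\O_{\PP^n}}(\I_{Y/\PP^n},\O_{\PP^n}(-2))$ gives such a vector via $\psi\circ\alpha=\lambda_c$, the entries of $c$ being the constants $\psi\circ\alpha$ on each summand, which lie in $\Hom(\O_{\PP^n}(-2),\O_{\PP^n}(-2))=H^0(\O_{\PP^n})=\CC$. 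This sets up an isomorphism between the kernel above and $\Hom_{\O_{\PP^n}}(\I_{Y/\PP^n},\O_{\PP^n}(-2))$, so it remains to prove that this last group vanishes. For this I would use that $\codim Y\geq 2$ (the only alternative compatible with the hypotheses is $Y$ a quadric hypersurface, a degenerate case I would set aside): on $U=\PP^n\setminus Y$ we have $\I_{Y/\PP^n}|_U=\O_U$, so any $\psi$ restricts to a section of $\O_{\PP^n}(-2)$ over $U$; since $\PP^n\setminus U$ has codimension at least two and $\O_{\PP^n}(-2)$ is locally free, this section extends to $H^0(\PP^n,\O_{\PP^n}(-2))=0$, and torsion-freeness of $\I_{Y/\PP^n}$ then forces $\psi=0$.

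The main obstacle, and the step I would be most careful about, is the transition from the restricted map $M^{t}$ on $X$ to the intrinsic statement on $\PP^n$: it is precisely the combination of irreducibility ($H^0(\O_X)=\CC$) and nondegeneracy (injectivity on linear forms) that allows me to trade computations on $X$ for computations on $\PP^n$ and thereby reduce to the clean vanishing $\Hom_{\O_{\PP^n}}(\I_{Y/\PP^n},\O_{\PP^n}(-2))=0$. Everything else is formal once the linear presentation of $\I_{Y/\PP^n}$ is in hand.
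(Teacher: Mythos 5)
Your argument is correct, and it is worth noting at the outset that the paper does not actually prove this lemma: it only cites \cite[Lemma 2.7]{KL-gauss}. Your route --- sheafify the graded presentation $\O_{\PP^n}(-3)^{\oplus b}\to\O_{\PP^n}(-2)^{\oplus a}\to\I_{Y/\PP^n}\to 0$, apply $\Shom(-,\O_X(-2))$, and use $H^0(\O_X)=\CC$ together with the injectivity of $H^0(\O_{\PP^n}(1))\to H^0(\O_X(1))$ to transport the constant-coefficient relations from $X$ back to $\PP^n$ --- is the standard mechanism behind statements of this kind (it is in substance how the cited lemma and Wahl's original argument for Lemma \ref{lemma:well-known} proceed), so you have essentially reconstructed the intended proof. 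The only point where you diverge is the final vanishing: the usual way to finish is to note that a degree-$(-2)$ module homomorphism $I_Y\to R$ sends the quadric generators $q_i$ to constants $c_i$ subject to $c_iq_j=c_jq_i$, which forces all $c_i=0$ as soon as there are two linearly independent generators; your Hartogs-type proof that $\Hom_{\O_{\PP^n}}(\I_{Y/\PP^n},\O_{\PP^n}(-2))=0$ when $\codim Y\geq 2$ accomplishes the same thing (one small slip: what kills $\psi$ once $\psi|_U=0$ is torsion-freeness of the target $\O_{\PP^n}(-2)$, i.e.\ that its image is a torsion subsheaf of a locally free sheaf, not torsion-freeness of $\I_{Y/\PP^n}$). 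Finally, your aside about the quadric hypersurface is not merely a degenerate case to set aside: if $Y$ is an irreducible quadric then $\I_{Y/\PP^n}\cong\O_{\PP^n}(-2)$, so $\Shom_{\O_{\PP^n}}(\I_{Y/\PP^n},\O_X)(-2)\cong\O_X$ and the conclusion genuinely fails. The lemma therefore tacitly assumes $\codim Y\geq 2$ (equivalently, at least two independent quadric generators); this holds for every $Y$ to which the paper applies it, but it is an honest hypothesis that your proof correctly isolates.
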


We will also make use of the following simple observation:

\begin{lemma} \label{lemma:trick}
  Let $S \subset \PP^g$ be a smooth $K3$ surface of degree $2g-2$. Then 
$h^0(\N_{S/\PP^g}(-2))=h^1(\T_S(-2))$.
\end{lemma}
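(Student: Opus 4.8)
The plan is to read the identification off the normal bundle sequence of $S\subset\PP^g$, twisted by $\O_S(-2)$:
\[ 0 \longrightarrow \T_S(-2) \longrightarrow \T_{\PP^g}|_S(-2) \longrightarrow \N_{S/\PP^g}(-2) \longrightarrow 0. \]
Passing to cohomology, the connecting homomorphism sits in
\[ H^0(\T_{\PP^g}|_S(-2)) \to H^0(\N_{S/\PP^g}(-2)) \xrightarrow{\ \delta\ } H^1(\T_S(-2)) \to H^1(\T_{\PP^g}|_S(-2)), \]
so it suffices to prove that the two flanking terms $H^0(\T_{\PP^g}|_S(-2))$ and $H^1(\T_{\PP^g}|_S(-2))$ both vanish; then $\delta$ is the asserted isomorphism and the two dimensions agree.

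Both of these I would compute from the Euler sequence restricted to $S$ and twisted by $\O_S(-2)$:
\[ 0 \longrightarrow \O_S(-2) \longrightarrow \O_S(-1)^{\oplus(g+1)} \longrightarrow \T_{\PP^g}|_S(-2) \longrightarrow 0. \]
The line-bundle cohomology on the K3 surface $S$ is easy to control: $H^0(\O_S(-1))=0$ since $\O_S(1)$ is ample, while $H^1(\O_S(-1))=H^1(\O_S(-2))=0$ by Serre duality (using $\omega_S\cong\O_S$) together with Kodaira vanishing applied to the ample bundles $\O_S(1)$ and $\O_S(2)$. From the long exact sequence of the twisted Euler sequence, the vanishings $H^0(\O_S(-1))=0$ and $H^1(\O_S(-2))=0$ immediately force $H^0(\T_{\PP^g}|_S(-2))=0$.

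The crux is the vanishing of $H^1(\T_{\PP^g}|_S(-2))$. Since $H^1(\O_S(-1))=0$, the same long exact sequence identifies this group with the kernel of the map
\[ \phi:\ H^2(\O_S(-2)) \longrightarrow H^2(\O_S(-1))^{\oplus(g+1)} \]
induced by the coordinate sections $x_0,\dots,x_g\in H^0(\O_S(1))$. By Serre duality $\phi$ is dual to the map $H^0(\O_S(1))^{\oplus(g+1)}\to H^0(\O_S(2))$, $(s_0,\dots,s_g)\mapsto\sum_i x_is_i$; as the $x_i$ form a basis of $H^0(\O_S(1))$, the image of this map is exactly the image of the multiplication map $H^0(\O_S(1))\otimes H^0(\O_S(1))\to H^0(\O_S(2))$. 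Hence $\phi$ is injective precisely when this multiplication map is surjective, which holds because $S\subset\PP^g$ is projectively normal (see \cite{SD}). This yields $H^1(\T_{\PP^g}|_S(-2))=0$, and the plan is complete.

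The step I expect to be the main obstacle is this last one: reducing $H^1(\T_{\PP^g}|_S(-2))=0$ to the surjectivity of the multiplication map $\Sym^2 H^0(\O_S(1))\to H^0(\O_S(2))$ and then invoking projective normality. This is exactly where the hypothesis that $S$ is a \emph{smooth} K3 embedded by a complete linear system enters the argument; the remaining manipulations are formal consequences of the normal and Euler sequences together with Kodaira vanishing and Serre duality.
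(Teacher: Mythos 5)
Your proof is correct and follows essentially the same route as the paper: both arguments reduce the statement via the normal bundle sequence to the vanishing of $h^i(\T_{\PP^g}|_S(-2))$ for $i=0,1$, and obtain this from the twisted Euler sequence by identifying the map $H^2(\O_S(-2))\to H^0(\O_S(1))^{\vee}\otimes H^2(\O_S(-1))$ with the dual of the multiplication map $H^0(\O_S(1))\otimes H^0(\O_S(1))\to H^0(\O_S(2))$, whose surjectivity is Saint-Donat's projective normality theorem. Your write-up merely makes explicit the auxiliary vanishings $H^0(\O_S(-1))=H^1(\O_S(-1))=H^1(\O_S(-2))=0$ that the paper leaves implicit.
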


\begin{proof}
  The Euler sequence twisted by $\O_S(-2)$ is
\[
\xymatrix{ 0 \ar[r] & \O_S(-2) \ar[r] & H^0(\O_S(1))^{\vee} \* \O_S(-1) \ar[r] & \T_{\PP^g}|_S(-2)  \ar[r] & 0.}\]
The map on cohomology $H^2(\O_S(-2)) \to H^0(\O_S(1))^{\vee} \* H^2(\O_S(-1))$ is the dual of the multiplication map $H^0(\O_S(1)) \* H^0(\O_S(1)) \to H^0(\O_S(2))$, which is surjective by \cite[Thm. 6.1(ii)]{SD}. Thus,  $h^i(\T_{\PP^g}|_S(-2))=0$ for $i=0,1$. The desired conclusion now follows from the exact sequence
\[
\xymatrix{ 0 \ar[r] & \T_S(-2) \ar[r] & \T_{\PP^g}|_S(-2)  \ar[r] & \N_{S/\PP^g}(-2) \ar[r] & 0.}
\]
\end{proof}

\section{The case of Clifford index one} \label{sec:c=1}

It is well-known that smooth curves of Clifford index one are either trigonal or isomorphic to smooth plane quintics. The next two results give all possible values of $h^0(\N_{C/\PP^{g-1}}(-2))$, or equivalently, $\cork \Phi_{\omega_C,\omega_C^{\*2}}$, by \eqref{eq:gauss2}, for such curves. 

The following result was proved in \cite[\S 3.8]{DS} in terms of {\it Maroni invariants} of the rational normal scroll defined by the $g^1_3$, cf. \cite{sc}. The result is apparently also contained in an unpublished preprint of Tendian \cite{DS}. We formulate the result in a slightly different way and prove it using \cite{KL-gauss}, which in principle adopts the same idea of proof as \cite{DS}. Note that the case of {\it general} trigonal curves was proved in \cite[Thm. 2.8]{CM2}.

\begin{prop} \label{prop:c=1}
  Let $C$ be a smooth trigonal curve of genus $g \geq 5$ and denote by $A$ its unique line bundle of type $g^1_3$.
  \begin{itemize}
  \item[(i)] If $g \geq 11$, then $h^0(\N_{C/\PP^{g-1}}(-2))=0$.
  \item[(ii)] If $g=10$, then $h^0(\N_{C/\PP^9}(-2))=0$, unless $\omega_C \cong 6A$, in which case one has $h^0(\N_{C/\PP^{9}}(-2))=1$.
\item[(iii)] If $g=8$ or $9$, then $h^0(\N_{C/\PP^{g-1}}(-2))=h^0(\omega_C-(g-4)A) \leq 1$.
\item[(iv)] If $g=7$, then $h^0(\N_{C/\PP^{6}}(-2))=1$, unless $\omega_C \cong 4A$, in which case one has $h^0(\N_{C/\PP^{6}}(-2))=2$.
\item[(v)] If $g=6$, then $h^0(\N_{C/\PP^{5}}(-2))=2$.
\item[(vi)] If $g=5$, then $h^0(\N_{C/\PP^{4}}(-2))=3$.
  \end{itemize}
\end{prop}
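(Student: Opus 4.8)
The plan is to realize the canonical curve $C \subset \PP^{g-1}$ on the rational normal scroll $T$ swept out by the lines spanned by the divisors of $|A|$, and to play off the good homological properties of $T$ against the divisor geometry of $C$ inside $T$. By geometric Riemann--Roch, each divisor in $|A|$ spans a line in $\PP^{g-1}$, and the union of these lines is an irreducible surface $T \subset \PP^{g-1}$ of minimal degree $g-2$, i.e. a rational normal scroll $T = \PP(\E) \to \PP^1$ with $\E = \O_{\PP^1}(a_1) \oplus \O_{\PP^1}(a_2)$, $a_1 + a_2 = g-2$. Writing $H$ for the hyperplane class and $F$ for the ruling class of $T$, we have $H^2 = g-2$, $H \cdot F = 1$, $F^2 = 0$, and $K_T \sim -2H + (g-4)F$, while restriction gives $H|_C \sim \omega_C$ and $F|_C \sim A$. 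First I would record, by adjunction on $T$, that $C \sim 3H + (4-g)F$: the intersection $C \cdot F = 3$ forces the coefficient of $H$, and $2g-2 = C \cdot (C + K_T)$ pins down the coefficient of $F$.

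Next I would reduce the twisted normal bundle in $\PP^{g-1}$ to the twisted normal bundle inside $T$. Since rational normal scrolls satisfy property $N_2$, Lemma \ref{lemma:well-known2} applied to $C \subset T$ gives $h^0(\Shom_{\O_{\PP^{g-1}}}(\I_{T/\PP^{g-1}}, \O_C)(-2)) = 0$; as $T$ is smooth along $C$ (here $a_1 \geq 1$ since $C$ is non-hyperelliptic, so $T$ is not a cone), this sheaf is $\N_{T/\PP^{g-1}}|_C$, whence $h^0(\N_{T/\PP^{g-1}}|_C(-2)) = 0$. Feeding this into the normal bundle sequence
\[ 0 \longrightarrow \N_{C/T}(-2) \longrightarrow \N_{C/\PP^{g-1}}(-2) \longrightarrow \N_{T/\PP^{g-1}}|_C(-2) \longrightarrow 0 \]
yields $h^0(\N_{C/\PP^{g-1}}(-2)) = h^0(\N_{C/T}(-2))$. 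Since $C$ is a divisor on the surface $T$, $\N_{C/T} = \O_T(C)|_C$, so the class computation above gives the single master identity
\[ h^0(\N_{C/\PP^{g-1}}(-2)) = h^0\big(C, (H + (4-g)F)|_C\big) = h^0(\omega_C - (g-4)A), \]
a line bundle of degree $10 - g$.

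It then remains to evaluate $h^0(\omega_C - (g-4)A)$ genus by genus. For $g \geq 11$ the degree is negative, giving (i); for $g = 10$ the degree is $0$, so the group vanishes unless $\omega_C \sim 6A$, giving (ii); for $g = 8,9$ the degree is $2$ or $1$, and since $C$ is non-hyperelliptic a degree-$2$ bundle has at most one section, giving the bound $\leq 1$ of (iii). For $g = 5, 6, 7$ I would compute through the trigonal map $\phi : C \to \PP^1$: writing $\phi_* \O_C = \O_{\PP^1} \oplus \O_{\PP^1}(-m_1) \oplus \O_{\PP^1}(-m_2)$ with $m_1 \leq m_2$ and $m_1 + m_2 = g+2$, Serre duality gives $\phi_* \omega_C = \O_{\PP^1}(-2) \oplus \O_{\PP^1}(m_1 - 2) \oplus \O_{\PP^1}(m_2 - 2)$, and the projection formula reduces the count to
\[ h^0(\omega_C - (g-4)A) = \max(0, 5 - m_1) + \max(0, 5 - m_2). \]
Running through the admissible pairs $(m_1, m_2)$ gives $3$ for $g = 5$ and $2$ for $g = 6$, while for $g = 7$ the value is $1$ except for the unbalanced type $(m_1, m_2) = (3,6)$, i.e. $\omega_C \sim 4A$, where it is $2$ — matching (vi), (v), (iv).

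The main obstacle I anticipate is precisely this last step: pinning down exactly which scroll types (equivalently, Maroni invariants) are realized by a smooth irreducible trigonal curve, and checking that the value of $h^0(\omega_C - (g-4)A)$ is then forced. Here one must rule out the over-unbalanced scrolls producing a spurious extra section, using that non-hyperellipticity forces $m_1 \geq 3$ and that irreducibility of $C$ in its class $3H + (4-g)F$ on $T$ bounds $m_2$ from above, and confirm that the jumps occur exactly at $\omega_C \sim 4A$ ($g=7$) and $\omega_C \sim 6A$ ($g=10$). The reductions in the first two paragraphs, by contrast, are formal once the $N_2$ property of the scroll is invoked.
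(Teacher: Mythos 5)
Your proposal is correct and takes essentially the same route as the paper: the core step --- placing $C$ on the scroll $T$ swept out by the $g^1_3$, invoking property $N_2$ and Lemma \ref{lemma:well-known2} to kill $h^0(\N_{T/\PP^{g-1}}|_C(-2))$, and reducing everything to $h^0(\omega_C-(g-4)A)$ --- is exactly the paper's argument. The only difference is cosmetic and lies in the endgame: where you evaluate $h^0(\omega_C-(g-4)A)$ for $g\le 7$ via the splitting of $\phi_*\O_C$ and the Maroni invariants (which is the Drewes--Stevens formulation the paper alludes to, and your case check is correct, the needed bound $m_1\ge 3$ coming from non-hyperellipticity), the paper gets the same numbers more directly from Riemann--Roch together with $h^0(kA)\ge k+1$ and the absence of a $g^2_4$.
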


\begin{proof}
In the canonical embedding $C \subset \PP^{g-1}$, the members of the $g^1_3$ on $C$ are collinear, and the lines sweep out a rational normal surface $Y \subset \PP^{g-1}$ containing $C$, cf. \cite[\S 4 and 6.1]{sc}. Since the $g^1_3$ is base point free, the curve $C$ does not intersect the possibly empty singular locus of $Y$ and $C \in |\O_Y(3)(-(g-4)\R)|$, where $\R$ is the class of the ruling of $Y$, cf. \cite[\S~6.1]{sc}. 

We have the twisted normal bundle sequence (recalling that  $Y$ is smooth along $C$): 
\[ 
\xymatrix{
0 \longrightarrow  \N_{C/Y}(-2) \ar@{=}[d]^{\wr} \ar[r] &  \N_{C/\PP^{g-1}}(-2) \ar[r] &  \N_{Y/\PP^{g-1}}|_C(-2) \ar[r] & 0. \\
\O_C(1)(-(g-4)\R) & & &
}\]
Since $Y$ satisfies property $N_2$ (as any of its smooth hyperplane sections does), Lemma \ref{lemma:well-known2} yields 
\[ h^0( \N_{C/\PP^{g-1}}(-2))=h^0(\O_C(1)(-(g-4)\R)=
h^0(\omega_C-(g-4)A).\]
 Because $\deg (\omega_C-(g-4)A)=10-g$, items (i)-(iv) easily follow. 
(In item (iv) one uses that $h^0(3A) \geq 4$, to conclude by Riemann-Roch that
$h^0(\omega_C-3A) \geq 1$.)

If $g=6$, we get $h^0(\omega_C-2A) \leq 2$, since otherwise $C$ would carry a $g^2_4$ and thus be hyperelliptic, a contradiction. At the same time, $h^0(2A) \geq 3$, so that $h^0(\omega_C-2A) \geq 2$ by Riemann-Roch. Hence (v) follows. If
$g=5$, then $h^0(\omega_C-A)=h^1(A)=3$, and (vi) follows. 
\end{proof}

\begin{prop} \label{prop:c=1SPQ}
  If $C$ is isomorphic to a smooth plane quintic (whence of genus $6$), then $h^0( \N_{C/\PP^{g-1}}(-2)) =3$.
\end{prop}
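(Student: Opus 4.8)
The plan is to mimic the proof of Proposition~\ref{prop:c=1}, replacing the rational normal scroll by the Veronese surface. First I would record that a smooth plane quintic has genus $g=6$ and canonical bundle $\omega_C \cong \O_{\PP^2}(2)|_C$, since for a plane curve of degree $d$ one has $\omega_C \cong \O_{\PP^2}(d-3)|_C$. Because the restriction map $H^0(\O_{\PP^2}(2)) \to H^0(\omega_C)$ is an isomorphism (both spaces having dimension $6$, as follows from the exact sequence $0 \to \O_{\PP^2}(-3) \to \O_{\PP^2}(2) \to \omega_C \to 0$ together with $H^i(\O_{\PP^2}(-3))=0$ for $i=0,1$), the canonical embedding $C \subset \PP^5$ factors as $C \hookrightarrow \PP^2 \overset{v_2}{\hookrightarrow} \PP^5$, with image lying on the Veronese surface $V := v_2(\PP^2)$. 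Under the isomorphism $V \cong \PP^2$ we have $\O_V(1) \cong \O_{\PP^2}(2)$ and $C \in |\O_{\PP^2}(5)|$.

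Next I would exploit the twisted normal bundle sequence
\[
0 \longrightarrow \N_{C/V}(-2) \longrightarrow \N_{C/\PP^5}(-2) \longrightarrow \N_{V/\PP^5}|_C(-2) \longrightarrow 0,
\]
which is available because $V$ is smooth, so that $C$ avoids the (empty) singular locus. The Veronese surface satisfies property $N_2$: indeed its smooth hyperplane sections are rational normal quartic curves in $\PP^4$, which satisfy $N_p$ for all $p$, and property $N_2$ ascends from a smooth hyperplane section. Lemma~\ref{lemma:well-known2}, applied with $Y=V$ and $X=C$, then gives $h^0(\Shom_{\O_{\PP^5}}(\I_{V/\PP^5},\O_C)(-2))=0$, hence $h^0(\N_{V/\PP^5}|_C(-2))=0$, and the sequence yields $h^0(\N_{C/\PP^5}(-2)) = h^0(\N_{C/V}(-2))$.

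It then remains to identify the line bundle $\N_{C/V}(-2)$ on $C$. Since $C$ is a divisor on the smooth surface $V \cong \PP^2$ with $C \in |\O_{\PP^2}(5)|$, we have $\N_{C/V} \cong \O_{\PP^2}(5)|_C$, while $\O_V(2)|_C \cong \O_{\PP^2}(4)|_C$; therefore $\N_{C/V}(-2) \cong \O_{\PP^2}(1)|_C$. Finally the exact sequence $0 \to \O_{\PP^2}(-4) \to \O_{\PP^2}(1) \to \O_{\PP^2}(1)|_C \to 0$, together with the vanishing $H^i(\O_{\PP^2}(-4))=0$ for $i=0,1$, gives $h^0(\O_{\PP^2}(1)|_C)=h^0(\O_{\PP^2}(1))=3$, whence $h^0(\N_{C/\PP^5}(-2))=3$.

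The computations here are elementary, so the only point requiring care is the vanishing of the outer term $h^0(\N_{V/\PP^5}|_C(-2))$: one must confirm that the Veronese surface meets the hypotheses of Lemma~\ref{lemma:well-known2}, namely that its ideal is generated by quadrics with linear first syzygies. I would justify this either by invoking the well-known linear (Eagon--Northcott) resolution of the determinantal ideal of $2\times 2$ minors of a generic symmetric $3\times 3$ matrix of linear forms, or, as above, by passing to a rational normal curve hyperplane section and using Green's hyperplane section principle \cite{gr}.
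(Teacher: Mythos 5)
Your proof is correct, and it is essentially the argument the paper invokes: the paper proves this proposition by citing \cite[Thm. 2.3]{CM2} and \cite[Prop. 2.9(d)]{KL-gauss}, and the latter is precisely the Veronese-surface argument you give, which is also word-for-word the structure of the paper's own proof of the plane sextic case (Proposition \ref{prop:c=2SPS}), with the $2$-Veronese in $\PP^5$ replacing the $3$-Veronese in $\PP^9$ and $\N_{C/V}(-2)\cong\O_{\PP^2}(1)|_C$ contributing the $3$ sections. All the individual steps (the factorization through $v_2$, property $N_2$ for the Veronese, the application of Lemma \ref{lemma:well-known2}, and the final cohomology count) check out.
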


\begin{proof}
  This is \cite[Thm. 2.3]{CM2}. Alternatively, it follows from \cite[Prop. 2.9(d)]{KL-gauss}. 
\end{proof}

\begin{remark} \label{rem:SMP}
  It is well known, cf. \cite{SD}, that a curve $C$ on a $K3$ surface is isomorphic to a smooth plane quintic if and only if $C \sim 2B+\Gamma$, where $B$ is a smooth genus $2$ curve, $\Gamma$ is a smooth rational curve and $B \cdot \Gamma=1$. In particular, as $\Gamma \cdot C=0$, the line bundle $\O_S(C)$ is not ample. 
\end{remark}

\begin{example} \label{ex:genere7}
  We give an example of a genus $7$ curve $C$ on a $K3$ surface such that $h^0(\N_{C/\PP^{6}}(-2))=2$. By standard techniques using lattice theory and the surjectivity of the period map, one can prove the existence of a $K3$ surface $S$ carrying a smooth irreducible elliptic curve $E$ and three smooth irreducible rational curves
$\Gamma$, $\Gamma_1$ and $\Gamma_2$ such that $E \cdot \Gamma=\Gamma \cdot \Gamma_1=\Gamma_1 \cdot \Gamma_2=1$ and $E \cdot \Gamma_1=E  \cdot \Gamma_2=\Gamma \cdot \Gamma_2=0$, cf. \cite[Fourth row of table p.~145]{JK}. Then $\O_C(E)$ induces a linear system of type $g^1_3$ on any smooth $C \in |4E+3\Gamma+2\Gamma_1+\Gamma_2|$ and $\omega_C \cong \O_C(4E)$, as $C \cdot \Gamma= C \cdot \Gamma_1= C \cdot \Gamma_2=0$. Thus, $h^0(\omega_C-3\O_C(E)) =h^0(\O_C(E))=2$, so that $h^0(\N_{C/\PP^{6}}(-2))=2$, by Proposition \ref{prop:c=1}(iv). One can check that
$|C|$ defines a birational morphism contracting $\Gamma$, $\Gamma_1$ and $\Gamma_2$, thus the projective model of $S$ has an $A_3$-singularity. 

In a similar way, one can construct examples of curves of genera $8$, $9$ and $10$ on $K3$ surfaces with
$h^0(\N_{C/\PP^{g-1}}(-2))=1$. We list the cases, which occur in \cite{JK}. In all cases, $E$ is a smooth elliptic curve and $\Gamma$ and $\Gamma'$ are smooth rational curves. The projective models have an $A_1$-singularity coming from the contraction of $\Gamma$. Propositions \ref{prop:main2} and \ref{prop:main3} imply that all cases with
$C$ trigonal, $ 8 \leq g \leq 10$ and $h^0(\N_{C/\PP^{g-1}}(-2))>0$ occur on {\it singular} projective models of $K3$ surfaces.
\vspace{0.2cm}

\begin{tabular}{ |l | l |l|l|}
  \hline			
  $g$ & $C\sim$ & intersections & appearance in \cite{JK} \\\hline\hline
$8$ & $4E+2\Gamma+\Gamma'$ & $E \cdot \Gamma=E \cdot \Gamma'=1$, $\Gamma \cdot \Gamma'=0$ & third row table p.~146 \\
$9$ & $5E+3\Gamma+\Gamma'$ & $E \cdot \Gamma=\Gamma \cdot \Gamma'=1$, $E \cdot \Gamma'=0$ & fourth row table p.~148  \\
$10$ & $6E+3\Gamma$ & $E \cdot \Gamma=1$ & fifth row table on p.~151 \\
  \hline  
\end{tabular}

\vspace{0.2cm}

Hence all the maximal values of $h^0(\N_{C/\PP^{6}}(-2))$ in Proposition \ref{prop:c=1} actually occur on curves on $K3$ surfaces. At the same time, also the remaining values occur for curves on $K3$ surfaces, as a consequence of Proposition \ref{prop:main2}. 
\end{example}

Assume now that $S \subset \PP^g$ is a smooth $K3$ surface of degree $2g-2$, with $g \geq 5$, all of whose hyperplane sections have Clifford index one, and set $H:=\O_S(1)$. By the classical results of Saint-Donat \cite{SD} and the fact that $H$ is ample, all smooth hyperplane sections are trigonal and the $g^1_3$ is induced by an elliptic pencil $|E|$ on the surface satisfying $E \cdot H=3$ (see, e.g., \cite[Thm. 1.3]{JK} for the precise statement). It is proved in \cite[\S 5]{JK} that one can find a pencil such that the three-dimensional rational normal scroll $T \subset \PP^g$ swept out by the span of the members of $|E|$ in $\PP^g$ (which are  plane cubics) is smooth (of degree $g-2$) and furthermore such that
  \begin{equation}
    \label{eq:h1R'}
   h^1(H-E)=h^1(H-2E)=0 
  \end{equation}
(the first vanishing by \cite[(2.6)]{Ma} or \cite[Prop. 2.6]{JK} and the latter by \cite[Prop. 5.5]{JK}, noting that the exceptional cases labeled (E0)-(E4) in \cite[Prop. 5.5]{JK} do not occur for ample $H$). Moreover, by \cite[Prop. 7.2]{JK}, the surface $S \subset \PP^g$ is cut out in $T$ by a section of $\O_T(3)(-(g-4)\F)$, where $\F$ is the class of the ruling of $T$, and the scroll type of $T$ is $(e_1,e_2,e_3)$ with $e_1+e_2+e_3=g-2$. (For the notion of scroll type and how to calculate it, cf., e.g., \cite{sc,Br,ste,JK}.) The possible scroll types occuring have been studied in \cite[2.11]{Reid0}, \cite[(1.7)]{ste} and \cite[\S 9.1]{JK}. 

\begin{lemma} \label{lemma:c=11}
  Let $C \subset S$ be a general hyperplane section. We have 
\[ h^0(\N_{S/\PP^g}(-2))=h^0(\N_{C/\PP^{g-1}}(-2))=0, \]
 except precisely in the following cases:
  \begin{itemize}
  \item[(i)] If $g=5$, then $h^0(\N_{S/\PP^5}(-2))=h^0(\N_{C/\PP^{4}}(-2))=3$;
   \item[(ii)] If $g=6$, then $h^0(\N_{S/\PP^6}(-2))=1$ and $h^0(\N_{C/\PP^{5}}(-2))=2$;
\item[(iii)] If $g=7$, then $h^0(\N_{C/\PP^{6}}(-2))=1$ and
\[ h^0(\N_{S/\PP^7}(-2))=\begin{cases} 1, & \mbox{if} \; H \sim 3E+\Gamma_1+\Gamma_2+\Gamma_3, \\ & \mbox{where} \; \Gamma_1,\Gamma_2,\Gamma_3 \; \mbox{are disjoint lines;} \\
0, & \mbox{otherwise}.\end{cases}
\]
\end{itemize}
\end{lemma}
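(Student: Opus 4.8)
The plan is to mirror the proof of Proposition \ref{prop:c=1}, using the chain of inclusions $S \subset T \subset \PP^g$ to reduce the surface computation to the evaluation of $h^0$ of a single line bundle on $S$. Since $T$ is a smooth three-dimensional rational normal scroll it satisfies property $N_2$, so Lemma \ref{lemma:well-known2} (with $Y=T$, $X=S$) gives $h^0(\N_{T/\PP^g}|_S(-2))=0$. Feeding this into the twisted normal bundle sequence
\[ 0 \longrightarrow \N_{S/T}(-2) \longrightarrow \N_{S/\PP^g}(-2) \longrightarrow \N_{T/\PP^g}|_S(-2) \longrightarrow 0 \]
yields $h^0(\N_{S/\PP^g}(-2))=h^0(\N_{S/T}(-2))$. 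As $S \in |\O_T(3)(-(g-4)\F)|$, one has $\N_{S/T}=\O_T(3)(-(g-4)\F)|_S$, and using $\O_T(1)|_S=\O_S(H)$ and $\O_T(\F)|_S=\O_S(E)$ this gives
\[ h^0(\N_{S/\PP^g}(-2))=h^0(\O_S(H-(g-4)E)). \]
(Restricting the same bundle to $C$ recovers $h^0(\omega_C-(g-4)A)$, in accordance with Proposition \ref{prop:c=1}.)

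It remains to compute $h^0(\O_S(D))$ for $D:=H-(g-4)E$. From $H^2=2g-2$, $H\cdot E=3$, $E^2=0$ one finds $D^2=22-4g$, $D\cdot H=10-g$ and $D\cdot E=3$. For $g\geq 11$ ampleness of $H$ forces $D$ non-effective, and for $g=10$ it forces $D=0$, which is excluded since $(6E)^2=0\neq H^2$; hence $h^0(D)=0$ in these ranges. For $g=5,6$ I would read off $h^0(D)=\chi(\O_S(D))=g-2$ and $g-5$, using $h^1(D)=0$ from \eqref{eq:h1R'} and $h^2(D)=h^0(-D)=0$, giving the values $3$ and $1$ of (i) and (ii). For $g=8,9$, where $D\cdot H=2,1$, I would argue $D$ is non-effective by a short lattice check: any effective $D$ is a sum of smooth rational curves of $H$-degree one or two (lines and conics, all of self-intersection $-2$), and among such sums the most negative self-intersection is $-8$ when $D\cdot H=2$ and $-2$ when $D\cdot H=1$, so the required values $-10$ and $-14$ cannot occur; hence $h^0(D)=0$, giving the vanishing for all $g\geq 8$.

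The crux is $g=7$, where $D=H-3E$ satisfies $D^2=-6$, $D\cdot H=3$. First I would show $h^0(D)\leq 1$. If $|D|$ had positive-dimensional mobile part $M$, then $M^2\geq 0$; if $M^2>0$ the Hodge index inequality $(M\cdot H)^2\geq M^2H^2\geq 2\cdot 12$ forces $M\cdot H\geq 5>3$, impossible, while if $M^2=0$ then $M\sim kE'$ for an elliptic pencil $|E'|$, and since $H$ is very ample its restriction to the genus-one curve $E'$ has degree $\geq 3$, forcing $k=1$, $M\cdot H=3$, zero fixed part, whence $D\sim E'$ and $D^2=0\neq -6$, again impossible. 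Thus $|D|$ is at most a single rigid divisor and $h^0(D)\in\{0,1\}$. When $D$ is effective I would decompose it into irreducible components of total $H$-degree $3$ and impose $D^2=-6$: a single irreducible curve is excluded since $-6<-2$, and the configurations $3\Gamma$, $2\Gamma_1+\Gamma_2$ and line-plus-conic are excluded, the latter two by the integrality constraint obtained from intersecting $H\sim 3E+D$ with a line component $\Gamma$ (which yields $3(\Gamma\cdot E)=4$, impossible). The only surviving case is $D=\Gamma_1+\Gamma_2+\Gamma_3$ with $\sum_{i<j}\Gamma_i\cdot\Gamma_j=0$, i.e.\ three pairwise disjoint lines, which is precisely case (iii); here $\Gamma_i\cdot E=1$ then follows automatically.

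For the curve values I would invoke Proposition \ref{prop:c=1}: the general hyperplane section is trigonal, giving $h^0(\N_{C/\PP^{g-1}}(-2))=3,2,1$ for $g=5,6,7$ and $0$ for $g\geq 11$; for $g=8,9$ I would establish the generic value $0$ by noting that $h^0(\omega_C-(g-4)A)=h^0((H-(g-4)E)|_C)$ equals the kernel of the coboundary map $H^1(\O_S(-(g-4)E))\to H^1(\O_S(H-(g-4)E))$ given by multiplication by the section cutting out $C$, which is injective for general $C$ (the non-vanishing forcing the special configurations of Example \ref{ex:genere7}, which live on singular models). The main obstacle throughout is the $g=7$ classification: both the exclusion of a mobile part and the finite lattice analysis of the rigid divisor rely on ampleness and very-ampleness of $H$ together with the integrality trick, and this is where essentially all the content of the statement is concentrated.
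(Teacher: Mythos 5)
Your computation of $h^0(\N_{S/\PP^g}(-2))$ is correct and follows the paper's route exactly: reduce via $S\subset T\subset\PP^g$, property $N_2$ of the scroll and Lemma \ref{lemma:well-known2} to $h^0(\O_S(H-(g-4)E))$, then evaluate by a lattice analysis. Your direct exclusion for $g=9$ (a degree-one effective class must be a single line, so $D^2=-2\neq-14$) is even a little shorter than the paper's, which instead establishes $h^0(H-4E)=0$ first; and your $g=7$ classification of $H-3E$ as three disjoint lines matches the paper's.

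The genuine gap is in the curve half for $g=7,8,9$ (and implicitly $g=10$). For $g=7$ you quote Proposition \ref{prop:c=1}(iv) as giving the value $1$, but that proposition gives $2$ in the exceptional case $\omega_C\cong 4A$, i.e. $\O_C(H-4E)\cong\O_C$; since $h^1(\O_S(-4E))=3$, this triviality is not excluded by $H-4E$ being non-effective on $S$, and Example \ref{ex:genere7} shows it really happens on (singular) models, so it must be ruled out for the general hyperplane section of a smooth model. For $g=8,9$ your entire argument is the assertion that the multiplication map $H^1(\O_S(-(g-4)E))\to H^1(\O_S(H-(g-4)E))$ is injective for general $C$; both spaces are nonzero (of dimensions $g-5$ and $2g-13$ respectively), and this injectivity is precisely the statement to be proved — the parenthetical appeal to Example \ref{ex:genere7} is not a proof, since that example exhibits some non-injective cases but does not classify them. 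The paper closes exactly this gap by computing all the $h^0(H-iE)$ (this is why it bothers with the bounds $h^0(H-3E)\leq 2$ and $h^0(H-4E)=0$ for $g=9$, which your shortcut skips), thereby determining the scroll type of $T$, and then invoking Brawner's theorem [Br, Thm.~2.4] to identify the type of the surface scroll $T'$ swept out by the $g^1_3$ on a general $C$ — type $(3,2)$, $(3,3)$, $(4,3)$ for $g=7,8,9$ — from which $h^0(\omega_C-(g-4)A)$ is read off directly. You would need either to reproduce that scroll-type argument or to supply an actual proof of the generic injectivity you assert.
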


\begin{remark} \label{c=1,g=7}
  As will be seen in the proof below, the two cases for $g=7$ occur, respectively, when $h^0(H-3E)=1$ and $0$. Moreover, the scroll type of $T$ is, respectively, $(3,1,1)$ and $(2,2,1)$ (cf. also \cite[\S 9.1 and table on p.~144-145]{JK}).
\end{remark}

\renewcommand{\proofname}{Proof of Lemma \ref{lemma:c=11}}

\begin{proof}
  The normal bundle sequence twisted by $-2$ yields
\[
\xymatrix{ 0 \ar[r] & \N_{S/T}(-2) \ar@{=}[d]^{\wr} \ar[r] & \N_{S/\PP^{g}}(-2) \ar[r] & \N_{T/\PP^{g}}|_S(-2) \ar[r] & 0 \\
& \O_S(H-(g-4)E)  & & & 
}\]
Since $T$ satisfies property $N_2$  (this can for instance be seen using Green's hyperplane section theorem on syzygies \cite[Thm. 3.b.7]{gr}, since its general curve section is a rational normal curve, which, as is well-known, satisfies property $N_2$), we have $h^0(\N_{T/\PP^{g}}|_S(-2))=0$ by Lemma \ref{lemma:well-known2}. Hence
\begin{equation}
  \label{eq:trig}
  h^0(\N_{S/\PP^{g}}(-2))=h^0(\O_S(H-(g-4)E).
\end{equation}
Since $H \cdot (H-(g-4)E)=10-g$, we have $h^0(\N_{S/\PP^{g}}(-2))=0$ for
$g \geq 10$. The possible values of $h^0(\O_S(H-(g-4)E)$, together with the scroll types, have been found in \cite[\S 9.1]{JK}, but we repeat the arguments for the sake of the reader. 

We consider first the case $g=9$. By \eqref{eq:h1R'} and Riemann-Roch, one finds
\begin{equation}
  \label{eq:19-1}
  h^0(H)=10, \; \; h^0(H-E)=7, \; \; h^0(H-2E)=4.
\end{equation}
We have $(H-3E)^2=-2$ and $(H-3E) \cdot H =7$. Hence $h^0(H-3E) \geq 1$ by Riemann-Roch and Serre duality. We claim that
\begin{equation}
  \label{eq:19-2}
  h^0(H-3E) =1 \; \mbox{or} \; 2.
\end{equation}
Indeed, if by contradiction $h^0(H-3E) \geq 3$, write $|H-3E|=|M|+\Delta$, with $\Delta$ the fixed part and $h^0(M) \geq 3$. We have
\begin{eqnarray*}
 0 < \Delta \cdot H  & = & 3E \cdot \Delta+M \cdot \Delta + \Delta^2
= 3E \cdot \Delta+(M+\Delta)^2-M^2-M \cdot \Delta \\
 & = & 3E \cdot \Delta-2-M^2-M \cdot \Delta.
\end{eqnarray*}
Hence $E \cdot \Delta \geq 2$ if $M^2 >0$, and from $3=E \cdot (H-3E)=E \cdot M + E \cdot \Delta$, we obtain $E \cdot M \leq 1$, which is impossible. Therefore, $M^2=0$, which means that $M \sim lF$ for an elliptic pencil $|F|$ and $l \geq 2$.
Since $F \cdot H \geq 3$, we must have $M \sim 2F$, $F \cdot H=3$ and $\Delta \cdot H=1$. Hence $\Delta$ is a line, so that $\Delta^2=-2$. As $3-3E \cdot F =F \cdot (H-3E)=F \cdot \Delta \geq 0$, we must have 
$E \cdot F \leq 1$, whence $E \sim F$. It follows that $H \sim 5E + \Delta$, which implies $\Delta^2=-14$, a contradiction. This proves \eqref{eq:19-2}. We next claim that 
\begin{equation}
  \label{eq:19-3}
  h^0(H-4E) =0.
\end{equation}
Indeed, assume the contrary and write $\Delta=H-4E$. Then
$\Delta^2=-8$, $H \cdot \Delta=4$ and $E \cdot \Delta=3$. The first and last of these equations imply that $\Delta$ must contain at least three irreducible curves in its support; moreover, at least one of them, call it $\Gamma$, must satisfy $\Gamma \cdot E \geq \Gamma \cdot H >0$. Then $\Gamma \cdot \Delta 
=\Gamma \cdot (H-4E) \leq -3$, whence
$\Delta -2\Gamma \geq 0$, which implies $\Gamma \cdot E=\Gamma \cdot H=1$. 
Since $(H-4E-2\Gamma)^2=-4$ and $H \cdot (H-4E-2\Gamma)=2$, we must
have $H-4E-2\Gamma \sim \Gamma_1+\Gamma_2$, with $\Gamma_1$ and $\Gamma_2$ disjoint lines. Since $E \cdot (\Gamma_1+\Gamma_2)=1$, we may assume $E \cdot \Gamma_1=1$. But then 
\[ -3=\Gamma_1 \cdot (H-4E) =
\Gamma_1 \cdot (2\Gamma+\Gamma_1+\Gamma_2)=2\Gamma \cdot \Gamma_1-2
,\]
 which is impossible. This proves \eqref{eq:19-3}.

From \eqref{eq:19-3} we obtain that $h^0(H-5E) =0$, whence $h^0(\N_{S/\PP^{9}}(-2))=0$ by \eqref{eq:trig}.
From \eqref{eq:19-1}-\eqref{eq:19-3} we obtain the two possible scroll types $(3,2,2)$ (occurring if $h^0(H-3E)=1$)
and $(3,3,1)$ (occurring if $h^0(H-3E)=2$) for $T$. By \cite[Thm. 2.4]{Br} a general hyperplane section $T'$ of $T$ is a rational normal scroll of type $(4,3)$ in both cases. This is the scroll swept out by the members of the $g^1_3$,  on a  general hyperplane section $C$ of $S$. Hence $h^0(\omega_C-5\O_C(E))=0$. It follows, by Proposition \ref{prop:c=1}(iii), that $h^0(\N_{C/\PP^{8}}(-2))=0$.

We next consider the case $g=8$. Similar considerations as in the previous case show that
\begin{eqnarray}
  \label{eq:18-1}
  h^0(H)=9, \; \; h^0(H-E)=6, \; \; h^0(H-2E)=3, \\
\nonumber h^0(H-3E)=0 \; \mbox{or} \; 1, \; \; h^0(H-4E)=0.
\end{eqnarray}
We prove only the last vanishing here. We have $(H-4E)^2=-10$ and $H \cdot (H-4E)=2$. The latter implies that, if effective, $H-4E$ is linearly equivalent to a sum of at most two rational curves, counted with multiplicity. Hence $(H-4E)^2 \geq -8$, again a contradiction.

In particular, \eqref{eq:18-1} implies that $h^0(\N_{S/\PP^{8}}(-2))=0$ by \eqref{eq:trig} and that the two possible scroll types of $T$ are $(2,2,2)$ 
(occurring if $h^0(H-3E)=0$) and $(3,2,1)$ (occurring if $h^0(H-3E)=1$). By \cite[Thm. 2.4]{Br} a general hyperplane section $T'$ of $T$ is a rational normal scroll of type $(3,3)$ in both cases. This is, as before, the scroll swept out by the members of the $g^1_3$  on a  general hyperplane section $C$ of $S$. Hence $h^0(\omega_C-4\O_C(E))=0$. It follows, by Proposition \ref{prop:c=1}(iii), that $h^0(\N_{C/\PP^{7}}(-2))=0$.

Assume now that $g=7$. Similar considerations as above show that
\begin{equation}
  \label{eq:17-1}
  h^0(H)=8, \; \; h^0(H-E)=5, \; \; h^0(H-2E)=2, \; \; h^0(H-3E)=0 \; \mbox{or} \; 1.
\end{equation}
More precisely, as $(H-3E)^2=-6$ and $H \cdot (H-3E)=3$, we have
\[
  h^0(H-3E)=1 \; \; \mbox{if and only if $H-3E$ is the sum of three disjoint lines.}
\]
In particular, \eqref{eq:17-1} yields the two possible scroll types $(2,2,1)$ 
(occurring if $h^0(H-3E)=0$) and $(3,1,1)$ (occurring if $h^0(H-3E)=1$), with
$h^0(\N_{S/\PP^{7}}(-2))=0$ and $1$, respectively, by \eqref{eq:trig}. Moreover, by \cite[Thm. 2.4]{Br} a general hyperplane section $T'$ of $T$ is a rational normal scroll of type $(3,2)$ in both cases, which yields $h^0(\omega_C-3\O_C(E))=1$. It follows, by Proposition \ref{prop:c=1}(iv), that $h^0(\N_{C/\PP^{6}}(-2))=1$ in both cases.

Assume that $g=6$. Then $h^0(\N_{C/\PP^{5}}(-2))=2$ by Proposition \ref{prop:c=1}(v). 
Since $(H-2E)^2=-2$ and $H \cdot (H-2E) >0$, we have $h^0(H-2E)=1$ by \eqref{eq:h1R'} and Riemann-Roch, whence $h^0(\N_{S/\PP^{6}}(-2))=1$ by 
\eqref{eq:trig}.

Assume that $g=5$. Then $h^0(\N_{C/\PP^{4}}(-2))=3$ by Proposition \ref{prop:c=1}(vi). 
We have $(H-E)^2=2$,
whence $h^0(H-E) =3$ by  \eqref{eq:h1R'} and Riemann-Roch, so that
$h^0(\N_{S/\PP^{5}}(-2))=3$ by \eqref{eq:trig}.
\end{proof}
\renewcommand{\proofname}{Proof}

\section{The case of Clifford index two} 

It is well-known that a smooth curve of Clifford index two is either tetragonal  (not isomorphic to a smooth plane quintic) or isomorphic to a smooth plane sextic, cf., e.g., \cite{elms}. If $g=5$, then $C$ is a complete intersection of three quadrics in $\PP^4$, whence $h^0(\N_{C/\PP^4}(-2))=3$. 

The following result is in principle proved in the unpublished preprint \cite{Te}, albeit with a small gap, cf. \cite[Rem. 2.17]{KL-gauss}. It is also present in \cite[Table 2, p.~161]{Te-duke}, referring to another unpublished preprint \cite{Te2} for a proof. The result can also be deduced from \cite[Thm. 5.6]{W2-coho}
or from 
\cite[Thm. 2.16 and Prop. 2.19]{ste}.
We give a proof following the arguments in the proof of \cite[Prop. 2.18]{KL-gauss}.

\begin{prop} \label{prop:c=2}
  Let $C$ be a smooth tetragonal curve of genus $g \geq 6$, not isomorphic to a smooth plane quintic. Then $h^0( \N_{C/\PP^{g-1}}(-2)) \leq 1$, with equality if and only if 
  \begin{itemize}
  \item[(i)] $C$ is bielliptic; or
  \item[(ii)] $7 \leq g \leq 9$ and $C$ in its canonical embedding is a quadratic section of either the anticanonical image of $\PP^2$ blown up in $10-g$ possibly infinitely near points or the $2$-Veronese embedding in $\PP^8$ of an irreducible quadric in $\PP^3$; or
\item[(iii)] $g=6$.
  \end{itemize}
 \end{prop}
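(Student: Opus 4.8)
The plan is to mirror the strategy of Proposition \ref{prop:c=1}, replacing the surface scroll of the $g^1_3$ by the threefold scroll of the $g^1_4$. Let $A$ denote a line bundle of type $g^1_4$ on $C$. In the canonical embedding the divisors of $|A|$ span planes (by geometric Riemann--Roch), and these sweep out a $3$-dimensional rational normal scroll $T \subset \PP^{g-1}$ of type $(e_1,e_2,e_3)$ with $e_1+e_2+e_3 = g-3$ containing $C$ (cf. \cite{sc,ste}); since $|A|$ is base point free, $C$ avoids the vertex of $T$ when $T$ is a cone. As a variety of minimal degree, $T$ satisfies property $N_2$ (its general curve section is a rational normal curve, so Green's theorem \cite[Thm.~3.b.7]{gr} applies), whence Lemma \ref{lemma:well-known2} gives $h^0(\N_{T/\PP^{g-1}}|_C(-2))=0$. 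The twisted normal bundle sequence
\[ 0 \longrightarrow \N_{C/T}(-2) \longrightarrow \N_{C/\PP^{g-1}}(-2) \longrightarrow \N_{T/\PP^{g-1}}|_C(-2) \longrightarrow 0 \]
then reduces the computation to $h^0(\N_{C/\PP^{g-1}}(-2)) = h^0(\N_{C/T}(-2))$.

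Next I would invoke the structure theory of tetragonal curves (cf. \cite{sc,ste}): writing $T = \PP(\E)$ with $\F$ the class of the ruling and $H = \O_T(1)$, the curve $C$ is the relative complete intersection inside $T$ of two divisors $Q_1 \in |2H-b_1\F|$ and $Q_2 \in |2H-b_2\F|$. Consequently $\N_{C/T} \cong \O_C(Q_1) \+ \O_C(Q_2)$, and since $\O_C(\F)=A$,
\[ \N_{C/T}(-2) \cong \O_C(-b_1A) \+ \O_C(-b_2A). \]
Computing $\omega_C$ by adjunction from $\omega_T = -3H+(g-5)\F$ and comparing with $\omega_C = \O_C(1)$ forces $b_1+b_2 = g-5$. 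Ordering $b_1 \leq b_2$, we obtain $h^0(\N_{C/\PP^{g-1}}(-2)) = h^0(\O_C(-b_1A)) + h^0(\O_C(-b_2A))$. Since $\deg A = 4 > 0$, each summand vanishes once its twist acquires negative degree, so the key numerical point is that $0 \leq b_1 \leq b_2$; this I would deduce from $\Cliff C = 2$ (genuine tetragonality), as $b_1 < 0$ would exhibit $\O_C(-b_1A)$ as a pencil forcing lower special series. Granting $b_1 \geq 0$, and noting $b_2 \geq 1$ for $g \geq 6$, we get $h^0(\O_C(-b_2A))=0$ and $h^0(\O_C(-b_1A)) \in \{0,1\}$, so that $h^0(\N_{C/\PP^{g-1}}(-2)) \leq 1$, with equality precisely when $b_1 = 0$.

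It then remains to translate the condition $b_1 = 0$ into the geometry of (i)--(iii), and this is where I expect the real work to lie. When $g=6$ one has $b_1+b_2 = 1$, so $b_1=0$ automatically, giving (iii). For $g \geq 7$, if $b_1 = 0$ then $Q_1 \in |2H|$ is the restriction of a quadric of $\PP^{g-1}$, so $C$ is a quadric section of the surface $Y := Q_2$; a direct adjunction computation gives $\omega_Y = \O_Y(-1)$ and $\deg Y = g-1$, whence $Y$ is an anticanonically embedded (generalized) Del Pezzo surface and $C \in |-2K_Y|$. The hard part is then the classification of such $Y$ subject to $C$ being tetragonal of Clifford index $2$: the rational Del Pezzo surfaces of degree $g-1$ occur only for $7 \leq g \leq 9$ (the degree-$9$ case $Y=\PP^2$ yields plane sextics, which are pentagonal and hence excluded), and these are exactly the anticanonical images of $\PP^2$ blown up in $10-g$ points and the $2$-Veronese images of quadrics of $\PP^3$, giving (ii); while the irrational possibilities are cones over elliptic normal curves, on which $C$ acquires a bielliptic involution, giving (i) and accounting for the persistence of the value $1$ for all $g$. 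The principal obstacle throughout is establishing completeness of this dichotomy and the clean separation between the bielliptic case (all $g$) and the Del Pezzo cases ($7 \leq g \leq 9$), together with the verification that $b_1 \geq 0$ always holds.
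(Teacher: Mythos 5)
Your proposal is correct and follows essentially the same route as the paper: Schreyer's structure theory realizes $C$ as a complete intersection of divisors of classes $2H-b_1\R$ and $2H-b_2\R$ in the scroll of the $g^1_4$, Lemma \ref{lemma:well-known2} kills the outer term of the relevant normal-bundle sequence so that everything reduces to whether the smaller invariant vanishes, and the case where it does is classified via Nagata's list of linearly normal degree-$(g-1)$ surfaces in $\PP^{g-1}$ (anticanonical blow-ups of $\PP^2$ and $2$-Veronese quadrics for $7\le g\le 9$, cones over elliptic curves giving the bielliptic case, and the $3$-Veronese giving the excluded plane sextics). The only cosmetic difference is that you run the sequence relative to the threefold scroll $T$ and use the complete-intersection splitting of $\N_{C/T}$, whereas the paper runs it relative to the surface $Y_A\sim 2H-b_{1,A}\R$ (using that $Y_A$ satisfies $N_2$, from \cite[Lemma 2.16]{KL-gauss}), and it settles the converse implication by citing \cite[Prop. 3.2]{Br}.
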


\begin{proof}
 Let $|A|$ be any $g^1_4$ on $C$. By \cite[\S 6.2]{sc}, a hyperplane section curve $C \subset \PP^{g-1}$ lies in a rational normal scroll spanned by the divisors in $|A|$, not intersecting its possibly empty singular locus. In the desingularization of the scroll, denote by $H$ and $\R$ the pullbacks of the hyperplane bundle and ruling of the scroll, respectively. Then there are
two surfaces $\widetilde{Y}_A \sim 2H-b_{1,A}\R$ and $\widetilde{Z}_A \sim 2H-b_{2,A}\R$, for integers $b_{1,A} \geq b_{2,A} \geq 0$ such that $b_{1,A} + b_{2,A}=g-5$, with images $Y_A$ and $Z_A$ in $\PP^{g-1}$ so that $C = Y_A \cap Z_A$. 
Applying $\Shom_{\O_{\PP^{g-1}}}(-,\O_C)$ to the exact sequence
\[ 
\xymatrix{ 
0 \ar[r] & \I_{Y_A/\PP^{g-1}} \ar[r] & \I_{C/\PP^{g-1}} \ar[r] & \I_{C/Y_A} \ar[r] & 0}
\]
and twisting, we obtain
\[ 
\xymatrix{ 0  \ar[r] & \N_{C/Y_A}(-2) \ar@{=}[d]^{\wr} \ar[r] &  \N_{C/\PP^{g-1}}(-2)  \ar[r] &    \Shom_{\O_{\PP^{g-1}}}(\I_{Y_A/\PP^{g-1}},\O_C)(-2) \\
& \O_C(-b_{2,A}\R) & & 
}
\]
By \cite[Lemma 2.16]{KL-gauss}, the surface $Y_A \subset \PP^{g-1}$ satisfies property $N_2$. Thus, by Lemma \ref{lemma:well-known2}, we obtain $h^0( \N_{C/\PP^{g-1}}(-2))=h^0(\O_C(-b_{2,A}\R))$, that is, 
\begin{itemize}
\item $h^0( \N_{C/\PP^{g-1}}(-2))=0$ if $b_{2,A}>0$;
\item $h^0( \N_{C/\PP^{g-1}}(-2))=1$ if $b_{2,A}=0$.
\end{itemize}
In particular, the last case always happens when $g=6$. Moreover,  which is not a priori obvious, the invariant $b_{2,A}$ is either zero for all $A$, or nonzero for all $A$. 

Assume now that $g \geq 7$ and $b_{2,A}=0$. Then $Z_A$ is a quadric and, by 
\cite[Lemma 2.16]{KL-gauss}, $Y_A \subset \PP^{g-1}$ has degree $g-1$ and is linearly normal. Such surfaces are classified by \cite[Thm. 8]{Na}: 
\begin{itemize}
\item[(a)] $Y_A$ is the anticanonical image of $\PP^2$ blown up $10-g$ possibly infinitely near points;
\item[(b)]  $Y_A$ is the $2$-Veronese embedding in $\PP^8$ of an irreducible quadric in $\PP^3$;
 \item[(c)]  $Y_A$ is the cone over a smooth elliptic curve in $\PP^{g-2}$;
\item[(d)]  $Y_A$ is the $3$-Veronese embedding in $\PP^9$ of $\PP^2$.  
\end{itemize}

In case (d) the curve $C$ is isomorphic to a smooth plane sextic, whence pentagonal, a contradiction.

This leaves us with cases (a)-(c).
In case (c) the curve $C$ is bielliptic, and in cases (a)-(b) we have $g \leq 9$.

Conversely, if $C$ is bielliptic, or a quadratic section of a surface as in (a)-(b), then $b_2=0$ by \cite[Prop. 3.2]{Br} (see also \cite{sc}).
 \end{proof}

 \begin{remark} \label{rem:g=9}
When $g=9$, the cases where $C$ is a quadratic section of the anticanonical model of the Hirzebruch surface $\FF_1$ and of the $2$-Veronese embedding of a quadric in $\PP^3$ occur, respectively, when $C$ possesses a $g^2_6$ and a $g^3_8$, see \cite[(6.2)]{sc} or \cite[(3.2)]{sagra}. The two cases are mutually exclusive, since in the second case the two rulings on $\PP^1 \x \PP^1$ induce two $g^1_4$s on $C$, whereas in the first case $C$ possesses a unique $g^1_4$ (the $g^2_6$ maps the curve to a plane sextic with one singular point for reasons of genus and the unique $g^1_4$ is known to be cut out by lines through the singular point; alternatively, the curve can be embedded in $\FF_1$ and one may use
\cite[Cor. 1]{mar-hir}).
 \end{remark}

The next result must be well-known to the experts. We give a proof for lack of a reference, following the proof of  \cite[Prop. 2.9(d)]{KL-gauss} for smooth plane quintics.

\begin{prop} \label{prop:c=2SPS}
  If $C$ is isomorphic to a smooth plane sextic (whence of genus $10$), then 
$h^0( \N_{C/\PP^{g-1}}(-2)) =1$.
\end{prop}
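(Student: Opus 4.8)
The plan is to follow the same strategy used throughout Section 4 and in particular in the proof of Proposition \ref{prop:c=1SPQ}: realize the smooth plane sextic $C$ as a curve on a well-understood surface whose defining ideal has good syzygy properties, and then compute $h^0(\N_{C/\PP^{g-1}}(-2))$ via the twisted normal bundle sequence for the pair. A smooth plane sextic has genus $10$, so its canonical model sits in $\PP^9$. The natural surface to use is the $3$-Veronese embedding $Y$ of $\PP^2$ in $\PP^9$ (case (d) in the proof of Proposition \ref{prop:c=2}), since the plane sextic, being a curve in the linear system $|\O_{\PP^2}(6)|$, maps under the $3$-Veronese to a quadratic section $C = Y \cap Q$ of $Y$. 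First I would set up the exact sequence
\[
\xymatrix{
0 \ar[r] & \N_{C/Y}(-2) \ar[r] & \N_{C/\PP^9}(-2) \ar[r] & \Shom_{\O_{\PP^9}}(\I_{Y/\PP^9},\O_C)(-2) \ar[r] & 0,
}
\]
exactly as in the proof of Proposition \ref{prop:c=2}.

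The key point will be to verify that the $3$-Veronese surface $Y \subset \PP^9$ satisfies the hypotheses of Lemma \ref{lemma:well-known2}, namely that its homogeneous ideal is generated by quadrics with linear first syzygies (it satisfies property $N_2$; this is classically known for Veronese embeddings). Granting this, Lemma \ref{lemma:well-known2} gives $h^0(\Shom_{\O_{\PP^9}}(\I_{Y/\PP^9},\O_C)(-2))=0$, so the sequence collapses to an isomorphism $h^0(\N_{C/\PP^9}(-2)) = h^0(\N_{C/Y}(-2))$. Since $C$ is cut out on $Y$ by a single quadric $Q$, the normal bundle $\N_{C/Y}$ is the restriction $\O_Y(2)|_C = \O_C(2)$ in the $\PP^9$-grading, and hence $\N_{C/Y}(-2) \cong \O_C$. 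Therefore $h^0(\N_{C/Y}(-2)) = h^0(\O_C)=1$, which gives the desired value.

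Concretely, I would translate the intersection-theoretic computation back to $\PP^2$: under the $3$-Veronese the hyperplane class on $Y$ pulls back to $\O_{\PP^2}(3)$, so a quadratic section corresponds to $\O_{\PP^2}(6)$, confirming that $C \in |\O_{\PP^2}(6)|$ is precisely the plane sextic, and that $\N_{C/Y} \cong \O_C(\O_{\PP^2}(6))$, so $\N_{C/Y}(-2) = \O_C(\O_{\PP^2}(6) - \O_{\PP^2}(6)) = \O_C$. The one point needing care — and the main obstacle — is establishing the property $N_2$ (or at least the quadric-generation with linear syzygies) for the $3$-Veronese surface in $\PP^9$, so that Lemma \ref{lemma:well-known2} applies cleanly; I expect this to be citable from the standard literature on Veronese embeddings (for instance via Green's results, as is done for the scroll in Lemma \ref{lemma:c=11}), analogously to how the smooth plane quintic case in \cite{KL-gauss} handles the $2$-Veronese. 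Once that syzygy input is in hand, the rest of the argument is the same formal computation as in the proofs of Propositions \ref{prop:c=1} and \ref{prop:c=2}.
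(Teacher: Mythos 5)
Your proposal is correct and follows essentially the same route as the paper: the canonical model of $C$ lies on the $3$-Veronese surface $Y\subset\PP^9$ as a quadratic section, the $N_2$ property of $Y$ together with Lemma \ref{lemma:well-known2} kills the quotient term in the twisted normal bundle sequence, and $\N_{C/Y}(-2)\cong\O_C(6A-2\cdot 3A)\cong\O_C$ gives the value $1$. The only cosmetic difference is that you write the quotient as $\Shom_{\O_{\PP^9}}(\I_{Y/\PP^9},\O_C)(-2)$ rather than $\N_{Y/\PP^9}|_C(-2)$, which agree here since $Y$ is smooth.
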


\begin{proof}
Let $A$ be the very ample line bundle giving the embedding in $\PP^2$. As $\omega_C \cong \O_C(3A)$, in the canonical embedding $C$ is contained in the $3$-Veronese surface $Y \subset \PP^9$. We have a short exact sequence
\[
\xymatrix{ 0 \ar[r] & \N_{C/Y}(-2) \ar[r] & \N_{C/\PP^9}(-2)  \ar[r] &  \N_{Y/\PP^9}|_C (-2) \ar[r] & 0}
\] 
Since $Y$ satisfies condition $N_2$, we have $h^0( \N_{Y/\PP^9}|_C (-2))=0$ by 
Lemma \ref{lemma:well-known2}. Then the result follows since
$\N_{C/Y}(-2) \cong \O_C(6A-2 \cdot 3A)\cong \O_C$.
\end{proof}

We will now turn to curves on $K3$ surfaces and
start with the following famous example, the only occurrence of variation of gonality among smooth curves in a complete linear system on a $K3$ surface, by \cite{cp,Kn2}, and the only occurrence of smooth plane sextics, by \cite[Thm. 1.2]{Kn2}:

\begin{example}{\rm (Donagi-Morrison \cite[(2.2)]{DM})}. \label{ex:DM} Let $\pi: S \to \PP^2$ be a $K3$
surface of genus $2$, i.e. a double cover of $\PP^2$ branched along a smooth sextic, and
let $L := \pi^*\O_{\PP^2}(3)$. The arithmetic genus of the curves in $|L|$ is 
$10$. The smooth
curves in the codimension one linear subspace $\pi^*|H^0(\O_{\PP^2} (3))| \subset |L|$ are biellliptic,
whence of gonality $4$, whereas the general curve in $|L|$ is isomorphic
to a smooth plane sextic and therefore has gonality $5$.

The embedded surface $S \subset \PP^{10}$ is a complete intersection of the cone $V$ over the $3$-Veronese surface in $\PP^9$ and a quadric $Q$.  Since $V$ is smooth along $S$, the normal bundle sequence twisted by $-2$ yields
\[
\xymatrix{ 0 \ar[r] & \N_{S/V}(-2) \cong \O_S  \ar[r] & \N_{S/\PP^{10}}(-2) \ar[r] & \N_{V/\PP^{10}}|_S(-2) \ar[r] & 0}.\]
Since $V$ satisfies property $N_2$ (as its general hyperplane section does), we have  $h^0(\N_{V/\PP^{10}}|_S(-2))=0$ by Lemma \ref{lemma:well-known2}. Therefore, 
$h^0(\N_{S/\PP^{10}}(-2))=1$. Similarly, $h^0(\N_{C/\PP^{9}}(-2))=1$. 
\end{example}

We recall the following well-known fact:

\begin{lemma} \label{lemma:desc2}
  Let $|H|$ be a complete linear system of curves of genus $g \geq 7$ on a $K3$ surface
  such that all smooth curves in $|H|$ have Clifford index two. 

 Then, except for the Donagi-Morrison example \ref{ex:DM}, all smooth curves in 
$|H|$ are tetragonal, and, for any line bundle $A$ of type $g^1_4$ on any smooth $C \in |H|$, there is a globally generated line bundle $\O_S(D)$ on $S$ satifying $\O_C(D) \geq A$, 
$h^i(D)=h^i(H-D)=0$, $i=1,2$, and one of the three conditions
\begin{itemize}
\item $D^2=0$, $D \cdot H=4$; 
\item $D^2=2$, $D \cdot H=6$; $7 \leq g \leq 9$;
 \item $D^2=4$, $H \sim 2D$, $g=9$.
\end{itemize}
Moreover, $\O_C(D)$ computes the Clifford index of any smooth $C \in |H|$.
\end{lemma}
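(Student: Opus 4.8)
The plan is to prove Lemma \ref{lemma:desc2} by combining the classification of surfaces in Proposition \ref{prop:c=2} with Donagi-Morrison-type lattice-theoretic arguments to produce the line bundle $D$ on $S$ itself. First I would dispose of the exceptional Donagi-Morrison case: by \cite{cp,Kn2} and \cite[Thm. 1.2]{Kn2}, variation of gonality and smooth plane sextics occur only in Example \ref{ex:DM}, so outside that case every smooth $C \in |H|$ has constant gonality, and since $\Cliff C = 2$ forces gonality $4$ or $5$, and gonality $5$ would mean $C$ is a smooth plane sextic (excluded), all smooth curves are tetragonal. This handles the first assertion.

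Next, fixing a smooth $C \in |H|$ and a $g^1_4$ given by $A$, the key step is to lift $A$ to a line bundle on the surface. The standard tool here is the Donagi-Morrison / Green-Lazarsfeld theory: a line bundle computing a low Clifford index (or gonality) on a curve on a $K3$ surface is the restriction of a line bundle $\O_S(D)$ on $S$, and one can arrange $D$ to be globally generated with $0 \leq D^2$ and the numerical bounds $D^2 \leq \Cliff C + 2$. Concretely, I would invoke the Green-Lazarsfeld result that $\Cliff C$ is computed by the restriction of a line bundle $\O_S(D)$ with $\O_C(D) \geq A$ (see \cite{gl}), so that $\O_C(D)$ computes the Clifford index of every smooth member of $|H|$, giving the final sentence of the statement. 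The vanishings $h^i(D) = h^i(H-D) = 0$ for $i = 1,2$ can then be secured by choosing $D$ (and hence $H-D$) globally generated with positive self-intersection or of the form $lF$ for an elliptic pencil, using the standard $K3$ cohomology vanishing (Saint-Donat \cite{SD}): for a globally generated, nef line bundle with no fixed component one has $h^1 = 0$ unless it is a multiple of an elliptic pencil or composed with such, and $h^2 = 0$ whenever $D$ and $H-D$ are effective and nontrivial.

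The heart of the matter is then the numerical trichotomy on $(D^2, D \cdot H)$. Here I would argue as follows. Since $\O_C(D) \geq A$ and $A$ is a $g^1_4$, one has $D \cdot H - D^2 = \deg(\O_C(D)) - \cdots$ controlled by the degree $4$ of $A$ via adjunction on $C$; more precisely, $\deg \O_C(A) = 4$ and $h^0(A) = 2$ force $D \cdot H \leq 6$ together with $h^0(D) \geq 2$, so $D^2 \geq 0$. Combining $D^2 \in \{0,2,4\}$ (the three values compatible with $\Cliff C = 2$, by the Clifford index computation $\Cliff \O_C(D) = D \cdot H - D^2 - 2 = 2$, i.e. $D \cdot H = D^2 + 4$) with the Hodge index inequality and the genus constraint $7 \leq g \leq 9$ in the cases $D^2 > 0$ then yields exactly the three listed possibilities: $D^2 = 0$, $D \cdot H = 4$; $D^2 = 2$, $D \cdot H = 6$ with $7 \leq g \leq 9$; and $D^2 = 4$, $H \sim 2D$ with $g = 9$. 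The case $H \sim 2D$ arises because $D^2 = 4$ and $D \cdot H = 8 = 2 D^2$ force $H$ and $2D$ to be numerically (hence linearly, on a $K3$) equivalent by the Hodge index theorem.

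The main obstacle I expect is the lifting step together with pinning down global generation and the self-intersection bound simultaneously: a priori the line bundle produced by the gonality/Clifford-index theory need not be globally generated, and one may first obtain a possibly non-reduced or non-nef $D$. The resolution is the standard Donagi-Morrison reduction, replacing $D$ by its moving part and subtracting fixed rational curves to reach a globally generated representative without increasing $D^2$ above the allowed range, while keeping $\O_C(D) \geq A$ and the property that $\O_C(D)$ computes $\Cliff C$. Verifying that this reduction stays within the three numerical classes, and that the cohomological vanishings survive it, is the technically delicate part; everything else is bookkeeping with Riemann-Roch and the Hodge index theorem on the $K3$ lattice.
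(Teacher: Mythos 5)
Your outline follows essentially the same route as the paper: tetragonality outside the Donagi--Morrison example via \cite[Thm.~1.2]{Kn2}, lifting the $g^1_4$ to a line bundle $\O_S(D)$ with $\O_C(D)\geq A$ by the Donagi--Morrison/Green--Lazarsfeld/Reider-type results, the relation $D\cdot H=D^2+4$ from $\Cliff\O_C(D)=2$, and Saint-Donat-type vanishing for global generation and $h^i(D)=h^i(H-D)=0$. The paper does all of this by citing \cite[Lemma 8.3]{Kn}, \cite[(2.3)]{Ma} and \cite[Props.~2.6 and 2.7]{JK}.

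The gap is in your derivation of the trichotomy, which you attribute to the Hodge index theorem. Hodge index gives $(D\cdot H)^2\geq D^2H^2$; for $D^2=4$, $D\cdot H=8$ this reads $64\geq 4(2g-2)$, i.e.\ $g\leq 9$, with proportionality (hence $H\sim 2D$) \emph{only} in the boundary case $g=9$. So your claim that ``$D\cdot H=8=2D^2$ forces $H\sim 2D$ by Hodge index'' fails for $g=7,8$, and nothing in your sketch excludes a lift with $D^2=4$ in those genera. The exclusion comes from the sharper conclusion of the cited lifting lemma, namely that $D$ may be taken with $D^2\leq D\cdot(H-D)$ and that equality forces $H\sim 2D$; note that you cannot simply replace $D$ by $H-D$ to achieve this normalization, because $\O_C(H-D)=\omega_C-\O_C(D)$ need not contain $A$, so the compatibility of the normalization with $\O_C(D)\geq A$ is genuine content of \cite[Lemma 8.3]{Kn} and not bookkeeping. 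Similarly, for $D^2=2$ Hodge index allows $g=10$ with $H\sim 3D$: this is exactly the Donagi--Morrison example and has to be recognized and discarded by the standing exclusion, which your sketch does not address (you assert $7\leq g\leq 9$ as a ``constraint'' rather than deriving it; your intermediate claim ``$D\cdot H\leq 6$'' is also contradicted by the third case). The rest of the outline is sound.
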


\begin{proof}
The fact that all smooth  curves in 
$|H|$ are tetragonal except for the Donagi-Morrison example follows from \cite[Thm. 1.2]{Kn2}. Moreover,
by nowadays well-known Reider-like results as in \cite{DM,cp,Kn}, for any $C$ and $A$ as in the statement, there is a line bundle $\O_S(D)$ on $S$ such that $\O_C(D) \geq A$ and satisfying $0 \leq D^2\leq 4$ and $D \cdot H=D^2+4$ (see, e.g., \cite[Lemma 8.3]{Kn}). The Hodge index theorem yields the cases stated in the lemma, in addition to the possibility $D^2=2$ and $H \sim 3D$, which is the Donagi-Morrison example \ref{ex:DM}. The fact that $\O_S(D)$ can be chosen globally generated and such that $h^i(D)=h^i(H-D)=0$, $i=1,2$, follows from \cite[(2.3)]{Ma} (see also
\cite[Proof of Lemma 8.3]{Kn} and \cite[Props. 2.6 and 2.7]{JK}). The fact that 
$\O_C(D)$ computes the Clifford index of any smooth $C \in |H|$ is standard and easily checked.
\end{proof}

\begin{remark} \label{rem:ellpenc}
  When $D^2=0$, then $|D|$ is an elliptic pencil and $\O_C(D)=A$. 
\end{remark}

\begin{corollary} \label{cor:desc2}
  Let $C$ be a smooth curve of genus $g \geq 10$ lying on a $K3$ surface, such that $\O_S(C)$ is not as in the Donagi-Morrison example \ref{ex:DM}. Then $C$ is neither bielliptic nor isomorphic to a smooth plane sextic.
\end{corollary}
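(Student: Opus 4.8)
The plan is to argue by contradiction: suppose $C$ is bielliptic or isomorphic to a smooth plane sextic, and play the two available descriptions of its minimal pencil against each other. First I would record that in either case $\Cliff C = 2$. A smooth plane sextic has genus $10$ and Clifford index $2$ (computed by the plane embedding, with gonality $5$), while a bielliptic curve of genus $g \geq 6$ has Clifford index $2$, computed by its bielliptic $g^1_4$. Since by \cite{gl} the Clifford index is constant among the smooth members of the complete linear system $|H|$, $H := \O_S(C)$, every smooth curve in $|H|$ has Clifford index $2$, so I am in the setting of Lemma \ref{lemma:desc2}.

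The plane sextic case and the high-genus bielliptic case are then immediate. A smooth plane sextic is pentagonal, hence not tetragonal, so by Lemma \ref{lemma:desc2} (equivalently, directly by \cite{Kn2}) the only way it can lie on a $K3$ surface is the Donagi--Morrison Example \ref{ex:DM}, contrary to hypothesis. If $C$ is bielliptic with $g \geq 11$, then Reid's result \cite{Reid} already forbids $C$ from lying on any $K3$ surface. Thus the entire content is concentrated in the case where $C$ is bielliptic of genus exactly $10$, which I treat next.

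So assume $g = 10$, $C$ bielliptic, and $\O_S(C)$ not as in Example \ref{ex:DM}. Since $g = 10 > 9$, the last two alternatives of Lemma \ref{lemma:desc2} are excluded, so there is an elliptic pencil $|D|$ with $D \cdot H = 4$, and by Remark \ref{rem:ellpenc} one has $\O_C(D) = A$ for a $g^1_4$ bundle $A$; applying the lemma to the bielliptic pencil itself, I may take $A$ to be the bielliptic $g^1_4$, i.e. $A = \pi^* M$ for the double cover $\pi : C \to E$ onto an elliptic curve $E$ and a degree-$2$ bundle $M$ on $E$. The idea is to compute $h^0(C, 2A)$ in two ways. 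Writing $\pi_* \O_C = \O_E \+ \L^{-1}$ with $\deg \L = \tfrac12(2g-2) = 9$, the projection formula gives $h^0(C, 2A) = h^0(E, M^{\*2}) + h^0(E, M^{\*2} \* \L^{-1}) = 4 + 0 = 4$, since $\deg M^{\*2} = 4$ while $\deg(M^{\*2} \* \L^{-1}) = 4 - 9 < 0$. On the other hand $2A = \O_C(2D)$, and the restriction sequence
\[ 0 \longrightarrow \O_S(2D-H) \longrightarrow \O_S(2D) \longrightarrow \O_C(2D) \longrightarrow 0, \]
together with $h^0(2D-H)=0$ (as $(2D-H)\cdot H = -10$), $h^0(S,2D)=3$, and Serre duality $H^1(2D-H)^{\vee} \cong H^1(H-2D)$, yields $h^0(C,2A) \leq 3 + h^1(S, H-2D)$. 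Comparing the two computations, the bielliptic hypothesis forces $h^1(S, H-2D) \geq 1$.

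The main obstacle, and the point where Donagi--Morrison must emerge as the unique exception, is therefore to prove the vanishing $h^1(S, H-2D) = 0$, which then gives $h^0(C,2A) \leq 3 < 4$, the desired contradiction. Here $(H-2D)^2 = 2 > 0$ and $(H-2D) \cdot H = 10 > 0$, so $H-2D$ is big, and I would show it is also nef, so that Kawamata--Viehweg vanishing (equivalently, Saint-Donat's analysis of linear systems on $K3$ surfaces) gives $h^1 = 0$. The delicate part is to exclude the single lattice configuration defeating nefness, namely $H-2D \sim M' + V$ with $M'$ nef, $(M')^2 = 4$, $V$ a smooth rational $(-2)$-curve and $M' \cdot V = 0$ (which would produce $h^0(H-2D) = 4$). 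I expect this configuration either to contradict the ampleness of $H$ in combination with $D \cdot H = 4$ and $D^2 = 0$, or else to force $H \sim 3D'$ with $(D')^2 = 2$, i.e. to be precisely the Donagi--Morrison case, excluded by hypothesis. Carrying out this exclusion is the crux, and is where the lattice bookkeeping in the style of \cite{Kn2, JK} enters.
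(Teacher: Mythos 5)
Your treatment of the plane sextic case (via \cite[Thm. 1.2]{Kn2}) and of bielliptic curves of genus $\geq 11$ (via Reid) matches the paper, but for the remaining case --- $C$ bielliptic of genus $10$ --- your argument has a genuine gap exactly where you say "carrying out this exclusion is the crux." The whole contradiction rests on the vanishing $h^1(S,H-2D)=0$, and you do not prove it; you only conjecture that the obstructing configurations can be ruled out. Moreover, the way you frame that step is not quite right: for an effective divisor $L$ on a $K3$ surface, $h^1(L)\neq 0$ is governed by failure of numerical $1$-connectedness (a nontrivial effective decomposition $L\sim A_1+A_2$ with $A_1\cdot A_2\leq 0$), not by failure of nefness, and there is no a priori reason that the only bad decomposition of $H-2D$ (with $(H-2D)^2=2$, $(H-2D)\cdot H=10$) is the single configuration $M'+V$ with $(M')^2=4$, $M'\cdot V=0$ that you name. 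The gap is fillable --- the paper itself carries out precisely this kind of analysis in the proof of Lemma \ref{lemma:rest2i}, where a decomposition $A_1+A_2$ as above is shown to produce a divisor $D+A_i$ satisfying the conditions of Lemma \ref{lemma:desc2} with $(D+A_i)^2>0$, impossible for $g=10$ outside the Donagi--Morrison case --- but as written your proposal does not establish the statement.

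You should also be aware that the paper's own proof of the bielliptic case is far shorter and avoids all cohomology: a bielliptic curve carries a one-dimensional family of $g^1_4$'s (the pullbacks $\pi^*M'$ of the degree-$2$ line bundles on the elliptic curve, each with $h^0(\pi^*M')=h^0(M')+h^0(M'\otimes\L^{-1})=2$), whereas for $g\geq 10$ Lemma \ref{lemma:desc2} together with Remark \ref{rem:ellpenc} forces every $g^1_4$ on $C$ to be cut out by an elliptic pencil $|D|$ on $S$ with $D\cdot H=4$, of which there are only finitely many. This single counting argument disposes of all $g\geq 10$ at once, without invoking Reid and without the section-count on $2A$ that drives your approach.
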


\begin{proof}
 If $C$ is bielliptic, then $C$ must contain infinitely many $g^1_4$s, which is impossible when $g \geq 10$ by Lemma \ref{lemma:desc2} and Remark \ref{rem:ellpenc} if we are not in the Donagi-Morrison example \ref{ex:DM}, recalling that there are finitely many elliptic pencils of degree $4$ with respect to $C$ on $S$.
The fact that smooth plane sextics only occur in the  Donagi-Morrison example follows from \cite[Thm. 1.2]{Kn2}.
\end{proof}

\begin{remark} \label{rem:biell}
(a) The fact that there are no bielliptic curves of genus $g \geq 11$ on a $K3$ surface is a well-known result of Reid's \cite[Cor. 2]{Reid}, already mentioned in the introduction.

(b)
By \cite[Thm. 3.12]{AF} and \cite[Thm. 1.2]{Kn2}, 
there exists no smooth bielliptic curve of genus $g$ with $6 \leq g \leq 9$ on a $K3$ surface that is general in its complete linear system. (Indeed, as $\rho(g,1,4) \leq 0$, by \cite[Thm. 3.12]{AF} any curve of Clifford dimension one and general in its linear system on a $K3$ has a finite number of pencils computing its gonality. Then the result follows as curves of of genus $\leq 9$ and Clifford dimension $>1$ lying on $K3$ surfaces  are only smooth plane quintics by \cite[Thm. 1.2]{Kn2}, which cannot be bielliptic, cf., e.g., \cite[\S 2.2]{CM}.)
Thus, case (i) in Proposition \ref{prop:c=2} never occurs if $C$ is general in its linear system on a $K3$ surface.
\end{remark}

Assume now (and for the rest of the section)  that $S \subset \PP^g$ is a smooth $K3$ surface, all of whose hyperplane sections have Clifford index two. By \eqref{eq:SCk2}, \eqref{eq:Sk2}, Proposition \ref{prop:c=2} and Corollary \ref{cor:desc2}, the cases for which $h^0(\N_{S/\PP^g}(-2)) \neq 0$ apart from the Donagi-Morrison example \ref{ex:DM} must satisfy $g \leq 9$.

The next example is well-known:

\begin{example} \label{ex:old}
  If $g=5$, then $S \subset \PP^5$ is a complete intersection of three quadrics, so that $\N_{S/\PP^5}\cong  \O_S(2)^{\+3}$ and $h^0(\N_{S/\PP^5}(-2))=h^0(\N_{C/\PP^4}(-2))=3$. 

If $g=6$, then $S$ is $BN$ general in the sense of Mukai (cf., e.g., \cite[Prop. 10.5]{JK}), so that by \cite{Mu}, $S$ is a quadratic  section of a (possibly singular) quintic Del Pezzo threefold $V$ in $\PP^6$ (in turn a hyperplane section of a 
quintic Del Pezzo fourfold in $\PP^7$). As in Example \ref{ex:DM}, one proves that $h^0(\N_{S/\PP^6}(-2))=h^0(\N_{C/\PP^5}(-2))=1$.
\end{example}

The next four lemmas will be the necessary ingredients to finish the proof of Proposition \ref{prop:main2} in the next section.

\begin{lemma} \label{lemma:doppio}
  We have $h^0(\N_{S/\PP^g}(-2))= h^0(\N_{C/\PP^{g-1}}(-2))=1$ in the following cases:
  \begin{itemize}
    \item[(i)] $g=9$ and $H \sim 2D$ with $D^2=4$. A general such $S$ is the $2$-Veronese embedding of a quartic in $\PP^3$, and thus a quadratic section of the $2$-Veronese embedding of $\PP^3$ in $\PP^{9}$. Conversely, any such smooth quadratic section is a $K3$ surface carrying such a divisor $D$.
    \item[(ii)] $g=7$ (resp., $8$) and there is a globally generated line bundle $D$ on $S$ satisfying $D^2=2$ and $D \cdot H=6$. A general such $S$ is a quadratic section of the sextic Del Pezzo threefold $W$ in $\PP^7$ that is a divisor of bidegree $(1,1)$ in $\PP^2 \x \PP^2$ (resp., a quadratic section of a blow up of $\PP^3$ at a point embedded in $\PP^8$ by the linear system of quadrics through the point). Conversely, any such smooth quadratic section is a $K3$ surface carrying such a divisor $D$.
\end{itemize}
\end{lemma}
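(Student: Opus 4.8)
The plan is to place $S$, in each of the three cases, inside a three-dimensional Del Pezzo (Fano index two) variety $V \subset \PP^g$ with $-K_V \cong \O_V(2)$, realized so that $S \in |{-}K_V|$ is an anticanonical, i.e.\ quadric, section $S = V \cap Q$, and then to repeat the normal bundle computation of Examples \ref{ex:DM} and \ref{ex:old}. The globally generated divisor $D$ furnished by Lemma \ref{lemma:desc2} (together with $D' := H - D$, which has the same numerical type in case (i) and the $g=7$ part of (ii)) provides morphisms $\phi_D,\phi_{D'}$ through which $\phi_H$ factors: in (i), $\phi_H = v_2 \circ \phi_D$ with $v_2 \colon \PP^3 \to \PP^9$ the $2$-Veronese, so $S \subset V := v_2(\PP^3)$ (the degree $8$ Del Pezzo threefold); in (ii) with $g=7$, $\phi_H$ is the Segre composition of $\phi_D \times \phi_{D'} \colon S \to \PP^2 \times \PP^2$, and since $S$ spans only $\PP^7 \subset \PP^8$ it lies in a $(1,1)$-divisor $V := W$; in (ii) with $g=8$, $\phi_{D'}$ contracts the effective $(-2)$-class $\Gamma := H - 2D$ to a point $p$, and the induced lift to the blow-up realizes $S \subset V := \mathrm{Bl}_p\PP^3$, embedded by $|2H_{\PP^3}-E|$. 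In every case the crucial point is that $\phi_H$ is a closed embedding while the outer map ($v_2$, the Segre embedding, or the Del Pezzo embedding of $V$) is injective and unramified, which forces the factoring morphism to $V$ to be a closed embedding as well; hence $S \subset V$. Comparing degrees, $\deg S = 2g-2 = 2\deg V$, together with $S \in |{-}K_V|$, shows $S = V \cap Q$ for a quadric $Q$.

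Granting this, the cohomological computation is uniform and short. Each $V$ satisfies property $N_2$: for $v_2(\PP^3)$ this is classical, and for the two Del Pezzo threefolds it follows from Green's hyperplane section theorem \cite[Thm. 3.b.7]{gr}, since their general surface and curve sections are Del Pezzo surfaces and elliptic normal curves of degree $6$ (resp.\ $7$), which satisfy $N_2$. Lemma \ref{lemma:well-known2} then gives $h^0(\N_{V/\PP^g}|_S(-2)) = 0$, so the twisted normal bundle sequence
\[ 0 \longrightarrow \N_{S/V}(-2) \longrightarrow \N_{S/\PP^g}(-2) \longrightarrow \N_{V/\PP^g}|_S(-2) \longrightarrow 0 \]
yields $h^0(\N_{S/\PP^g}(-2)) = h^0(\N_{S/V}(-2))$. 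Since $S = V \cap Q$ is a quadric section, $\N_{S/V} \cong \O_V(2)|_S \cong \O_S(2)$, so $\N_{S/V}(-2) \cong \O_S$ and $h^0 = 1$.

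For the curve statement I would argue in parallel, restricting by one further hyperplane: $C$ is a quadric section of the surface $\bar V := V \cap \PP^{g-1}$, which again satisfies $N_2$ by \cite[Thm. 3.b.7]{gr}, so $\N_{C/\bar V}(-2) \cong \O_C$ and $h^0(\N_{C/\PP^{g-1}}(-2)) = 1$. (Alternatively, $C$ is precisely a quadratic section of one of the surfaces in Proposition \ref{prop:c=2}(ii), so the value $1$ is already recorded there; then the bound $h^0(\N_{S/\PP^g}(-2)) \le h^0(\N_{C/\PP^{g-1}}(-2)) = 1$ of \eqref{eq:SCk2}, combined with the non-vanishing just established, closes the argument robustly for \emph{all} such $S$.) The converse assertions are then immediate: pulling a smooth quadric section of $v_2(\PP^3)$ back along the Veronese gives a smooth quartic in $\PP^3$, hence a $K3$ surface with $D = \O_{\PP^3}(1)$, $D^2 = 4$, $H \sim 2D$; and a smooth anticanonical section of either Del Pezzo threefold is a $K3$ surface of the correct degree on which the projection structure of $V$ cuts out the globally generated class $D$ with the stated intersection numbers.

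I expect the main obstacle to be the geometric identification in the $g=8$ part of (ii). Unlike the clean $2$-Veronese and $\PP^2 \times \PP^2$ situations, here $\phi_{D'}$ is not an embedding but contracts the curve(s) in $|H-2D|$, so realizing $S$ inside $\mathrm{Bl}_p\PP^3$ requires checking that $H-2D$ is effective (which follows from Riemann--Roch together with $(H-2D)\cdot H = 2 > 0$), that $D' = H-D$ is globally generated so that $\phi_{D'}$ is a morphism contracting exactly those curves, and that the resulting lift to the blow-up pulls $|2H_{\PP^3}-E|$ back to $|H|$. Verifying these points, and the surjectivity of the multiplication map $H^0(D) \otimes H^0(D') \to H^0(H)$ needed to pin down that $S$ lies in a single $(1,1)$-divisor in the $g=7$ case, is where the bulk of the geometric work lies; all of it rests on the vanishings $h^i(D) = h^i(H-D) = 0$ supplied by Lemma \ref{lemma:desc2}.
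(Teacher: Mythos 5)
Your geometric realizations (quadric section of $v_2(\PP^3)$, of the $(1,1)$-divisor in $\PP^2\x\PP^2$, of the blow-up of $\PP^3$ at a point) and the ensuing computation via property $N_2$, Lemma \ref{lemma:well-known2} and $\N_{S/V}(-2)\cong\O_S$ are exactly the paper's. The gap is in your claim that $\phi_H$ factors through the Del Pezzo threefold for \emph{every} $S$ satisfying the numerical hypotheses. In case (i), take $\Pic S=\ZZ D\oplus\ZZ E$ with $D^2=4$, $E^2=0$, $D\cdot E=2$ and $H\sim 2D$: one checks that $H$ is very ample and that all smooth hyperplane sections have Clifford index $2$, so this $S$ falls under (i); yet by Saint-Donat $\phi_D$ is $2{:}1$ onto a quadric in $\PP^3$, the multiplication map $\Sym^2H^0(D)\to H^0(2D)$ has only $9$-dimensional image, and $v_2\circ\phi_D$ is not $\phi_H$. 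Your argument that ``the outer map is injective and unramified, which forces the factoring morphism to be a closed embedding'' presupposes the factorization it is meant to produce; that factorization is equivalent to surjectivity of the relevant multiplication map, which you correctly flag as the hard point for $g=7$ but which genuinely fails for special $S$ in each of the three cases. Your fallback via \eqref{eq:SCk2} does not repair this: it bounds $h^0(\N_{S/\PP^g}(-2))$ from \emph{above} by $1$, whereas what is missing for such special $S$ is the lower bound.

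The paper's fix is the one step your write-up omits. Since $h^0(\N_{S/\PP^g}(-2))\le h^0(\N_{C/\PP^{g-1}}(-2))\le 1$ a priori by Lemma \ref{lemma:primo} and Proposition \ref{prop:c=2}, and since $h^0$ is upper semicontinuous on the irreducible moduli space of lattice-polarized pairs $(S,H,D)$ of each of the three types, it suffices to establish $h^0=1$ for $(S,H,D)$ general in moduli; the nonvanishing locus is closed and dense, hence everything, and the a priori cap gives equality everywhere. For general $(S,H,D)$ one may assume $D$ very ample in (i), resp.\ $F=H-D$ base point free and birational in (ii), and then the factorization --- and with it your entire computation, including the curve statement --- goes through. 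With that reduction inserted at the start, your argument coincides with the paper's.
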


\begin{proof}
  (i) Since $h^0(\N_{S/\PP^9}(-2)) \leq h^0(\N_{C/\PP^{g-1}}(-2)) \leq 1$ by Proposition \ref{prop:c=2}, we may assume that the pair $(S,D)$ is general in moduli, in particular that $D$ is very ample. (By the classical results of \cite{SD}, $D$ is not very ample if and only if there is a smooth rational curve $\Gamma$ such that $\Gamma \cdot D=0$ or a smooth elliptic curve $F$ such that $F \cdot D \leq 2$.) Then $S$ is the $2$-Veronese embedding of a quartic in $\PP^3$, and thus a quadratic section of the $2$-Veronese embedding of $\PP^3$ in $\PP^{9}$. As in Example \ref{ex:DM}, one computes $h^0(\N_{S/\PP^9}(-2))=1$.

(ii) As above, we may assume that $(S,H,D)$ is general in moduli, in particular that $F:=H-D$ is base point free and defines a birational map.

If $g=8$, set $\Delta:=H-2D$. Then $\Delta^2=-2$ and $H \cdot \Delta=2$, whence $\Delta$ is effective (an irreducible conic by generality) by Riemann-Roch. The complete linear system $|F|$ is base point free and maps $S$ birationally onto a quartic surface in $\PP^3$, contracting $\Delta$ to a point. Let $\pi: \widetilde{\PP^3} \to \PP^3$ be the blow up at this point, and let $\mathfrak{E}$ be the exceptional divisor. Then $S \in |2(\pi^*\O_{\PP^3}(2)-\mathfrak{E})|$ and is thus a quadratic section of $\widetilde{\PP^3}$ embedded into $\PP^8$ by $|\pi^*\O_{\PP^3}(2)-\mathfrak{E}|$, which restricted to $S$ becomes $|2(H-D)-\Delta|=|H|$, as claimed. As in Example \ref{ex:DM}, one computes $h^0(\N_{S/\PP^8}(-2))=1$. Conversely, it is easily checked that any such smooth quadratic section has the desired properties. 

If $g=7$, the linear systems $|D|$ and $|F|$ define an embedding
\[ S \subset \PP^2 \x \PP^2 \subset \PP^8,\]
where the right hand embedding is the Pl{\"u}cker embedding, which factors through the embedding $S \subset \PP^7$ defined by $|H|$. Thus
$S \subset \left(\PP^2 \x \PP^2\right) \cap \PP^7 \subset \PP^8$. A priori, the intersection $T:= \left(\PP^2 \x \PP^2\right) \cap \PP^7$ does not need to be transversal. However, assuming first it is, $T$ is a sextic Del Pezzo threefold, with $\omega_T \cong \O_T(-2)$, so that a smooth quadratic section of $T$ will be a 
$K3$ surface with the desired properties. As we assume that $(S,H,D)$ is general, we can thus assume that $S$ is a quadratic section of the sextic Del Pezzo threefold $T$ (see also \cite[Lemma 4.1]{KLV}). As in Example \ref{ex:DM}, one computes $h^0(\N_{S/\PP^7}(-2))=1$.
\end{proof}

\begin{lemma} \label{lemma:92}
  Assume that $g=9$ and there is a globally generated line bundle $D$ on $S$ satisfying $D^2=2$ and $D \cdot H=6$. Assume furthermore that $H$ is not $2$-divisible and that it is not of the form $H \sim 3E+2\Delta$, where $|E|$ is an elliptic pencil and $\Delta$ is an effective divisor such that $\Delta^2=-2$ and $\Delta \cdot E=2$.

Then $h^0(\N_{S/\PP^9}(-2))=0$ and $h^0(\N_{C/\PP^8}(-2))=1$ for any smooth $C \in |H|$.
\end{lemma}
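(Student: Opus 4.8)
The plan is to prove the two assertions in turn, reading off the value for the curve from Proposition~\ref{prop:c=2} and then trapping the value for the surface between the upper bound $h^0(\N_{S/\PP^9}(-2))\le 1$ and a lifting obstruction. For the curve, I would first identify $\O_C(D)$ as a base point free $g^2_6$. Since $H$ is not $2$-divisible, $D$ is exactly the bundle selected by the trichotomy of Lemma~\ref{lemma:desc2}, so $\O_C(D)$ computes $\Cliff C=2$; as $\deg\O_C(D)=D\cdot H=6$, this forces $h^0(\O_C(D))=3$, and global generation of $D$ gives base point freeness. By the mutual exclusivity recorded in Remark~\ref{rem:g=9}, a curve carrying a $g^2_6$ is, in its canonical embedding, a quadratic section of the anticanonical $\FF_1$, so Proposition~\ref{prop:c=2}(ii) yields $h^0(\N_{C/\PP^8}(-2))=1$.

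For the surface, \eqref{eq:SCk2} gives $0\le h^0(\N_{S/\PP^9}(-2))\le 1$, and by Lemma~\ref{lemma:trick} the assertion $h^0(\N_{S/\PP^9}(-2))=0$ is equivalent to $h^1(\T_S(-2))=0$. Feeding the sequence of Lemma~\ref{lemma:primo} with $k=2$, $j=0$,
\[0\longrightarrow \N_{S/\PP^9}(-3)\longrightarrow \N_{S/\PP^9}(-2)\longrightarrow \N_{C/\PP^8}(-2)\longrightarrow 0,\]
together with $h^0(\N_{S/\PP^9}(-3))=0$ from \eqref{eq:Skgeq3}, I obtain an injection $H^0(\N_{S/\PP^9}(-2))\hookrightarrow H^0(\N_{C/\PP^8}(-2))$ that is onto precisely when the generator $\sigma$ of the one-dimensional space $H^0(\N_{C/\PP^8}(-2))$ lifts to $S$. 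The whole problem is thus reduced to showing that $\sigma$ does not extend.

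The generator $\sigma$ records the realization of $C$ as a quadratic section of the anticanonical $\FF_1=Y_A$ (the case $b_{2,A}=0$ in the proof of Proposition~\ref{prop:c=2}). A lift of $\sigma$ would exhibit $S$ as a quadratic section of a nondegenerate threefold $V\subset\PP^9$ of degree $g-1=8$, satisfying the hypotheses of Lemma~\ref{lemma:well-known2} and having the anticanonical $\FF_1$ as general hyperplane section, so that $\N_{S/V}(-2)\cong\O_S$. Classifying such $V$ returns the cone over the anticanonical $\FF_1$, which by the adjunction computation $(3E+2\Delta)^2=16$ forces $H\sim 3E+2\Delta$ with $\Delta^2=-2$ and $\Delta\cdot E=2$ (case (VI)); the parallel possibility that the Clifford-computing bundle is $2$-divisible produces case (V). Both are excluded by hypothesis, so no lift exists and $h^0(\N_{S/\PP^9}(-2))=0$.

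The step I expect to be the real obstacle is making this last classification rigorous, since one must rule out \emph{every} threefold extension of $\FF_1\subset\PP^8$ through $S$, not merely the distinguished model. The cleanest way to close it, and the route I would actually carry out, is cohomological: via Lemma~\ref{lemma:trick} and the double cover $\phi_D\colon S\to\PP^2$ attached to $|D|$ (finite of degree $D^2=2$, branched along a plane sextic, so that $\phi_{D*}\O_S=\O_{\PP^2}\oplus\O_{\PP^2}(-3)$), I would push $h^1(\T_S(-2))=h^1(\Omega_S(-2))$ down to $\PP^2$ and express it through the cohomology of $\Omega_{\PP^2}\otimes\phi_{D*}\O_S(-2H)$ and of a torsion sheaf on the ramification curve. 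These groups are governed entirely by the intersection form on $S$, and I expect precisely the two excluded configurations to be the only ones producing a nonzero class; verifying this dichotomy by the sort of effectivity estimates used in the proof of Lemma~\ref{lemma:c=11} is where the bulk of the work lies.
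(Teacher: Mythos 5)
Your treatment of the curve is essentially the paper's: identify $\O_C(D)$ as a $g^2_6$, realize $C$ on $\FF_1$ as a bianticanonical curve, and invoke Proposition~\ref{prop:c=2}(ii). (You should still dispose of the bielliptic case separately, since then $|D|$ need not map $C$ birationally to a plane sextic; but Proposition~\ref{prop:c=2}(i) covers it immediately, so this is cosmetic.)

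The surface statement, however, is where the content of the lemma lies, and your argument for it has a genuine gap. The reduction via $0\to\N_{S/\PP^9}(-3)\to\N_{S/\PP^9}(-2)\to\N_{C/\PP^8}(-2)\to 0$ to ``the generator $\sigma$ does not lift'' is fine, but the next step is unjustified: a nonzero element of $H^0(\N_{S/\PP^9}(-2))$ does not automatically ``exhibit $S$ as a quadratic section of a nondegenerate threefold $V\subset\PP^9$'' with $\FF_1$ as hyperplane section. Sections of $\N_{S/\PP^n}(-2)$ control \emph{uniqueness} of extensions (Wahl), not the existence of an ambient threefold in which $S$ sits as a divisor; so the proposed classification of such $V$, even if carried out, would not close the argument. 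Your fallback --- pushing $h^1(\T_S(-2H))$ down along the double cover $\phi_D\colon S\to\PP^2$ --- also does not work as stated: $\O_S(-2H)$ is not a pullback from $\PP^2$ (only the $D$-part is), so the projection formula gives no decomposition of $\phi_{D*}(\T_S(-2H))$, and you yourself defer ``the bulk of the work'' to an unverified dichotomy that is precisely the assertion of the lemma. The paper instead sets $F:=H-D$ and proves, using the two excluded hypotheses exactly at this point, that $F$ is very ample (the failure of very ampleness of $F$ is equivalent to $H\sim 2(D+\Gamma)$ or $H\sim 3E+2\Delta$); then $|F|$ embeds $S$ in $\PP^4$ as a complete intersection of a quadric and a cubic, the Euler sequence of that embedding twisted by $-2H$ together with Mumford's theorem on multiplication maps gives $h^i(\T_{\PP^4}|_S(-2H))=0$ for $i=0,1$, and $h^1(\T_S(-2H))=h^0(\N_{S/\PP^4}(-2H))=h^0(-2D)+h^0(F-2D)=0$ follows, whence the claim by Lemma~\ref{lemma:trick}. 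You would need to either adopt this route or supply a complete substitute for the lifting obstruction; as written, the key vanishing is asserted rather than proved.
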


\begin{proof}
We first prove that $h^0(\N_{C/\PP^8}(-2))=1$. If $C$ is bielliptic, there is nothing to prove by Proposition \ref{prop:c=2}. Otherwise, the base point free complete linear system $|D|$ maps $C$ birationally to a plane curve of degree $6$, whence with one ordinary singular point, for reasons of genus. Blowing up the point, we get an embedding of $C$ into $\FF_1$ linearly equivalent to twice the anticanonical bundle. Thus,  $h^0(\N_{C/\PP^8}(-2))=1$ again by Proposition \ref{prop:c=2}.

We next prove that $h^0(\N_{S/\PP^9}(-2))=0$. We will use Lemma \ref{lemma:trick} and prove that $h^1(\T_S(-2H))=0$. 
 Contrary to the previous proof, we cannot assume that $(S,H,D)$ is general in this case. 

Set $F:=H-D$. Then $F^2=6$ and $F \cdot D=4$. 
We first claim that $F$ is ample. 

To prove this, assume to get a contradiction, that there exists an irreducible curve $\Gamma$ such that $\Gamma \cdot F \leq 0$. Then $\Gamma^2=-2$. It is easy to check that $\O_C(F-\Gamma)$ contributes to the Clifford index of any smooth curve $C \in |H|$ and that
\begin{eqnarray*}
 \Cliff \O_C(F-\Gamma) & = & (F-\Gamma)\cdot H - 2h^0(\O_C(F-\Gamma))+2 \\
& \leq & 
(F-\Gamma)\cdot H - 2h^0(F-\Gamma)+2 \\
& \leq & (F-\Gamma)\cdot H -(F-\Gamma)^2-2 = 4-\Gamma \cdot H+2F \cdot \Gamma.
\end{eqnarray*}
 Thus, by the assumption that $\Cliff C=2$, we must have $F \cdot \Gamma=0$ and $\Gamma \cdot H=\Gamma \cdot D=1$ or $2$. If $\Gamma \cdot H=2$, then 
$(D+\Gamma)^2=4$ and $(D+\Gamma) \cdot H=8$, whence
the Hodge index theorem yields $H \sim 2(D+\Gamma)$, contrary to our assumptions. If
$\Gamma \cdot H=1$, then $G:=F-\Gamma-D$ satisfies $G^2=0$ and $G \cdot H=3$, so that $|G|$ cuts out a $g^1_3$ on all $C \in |H|$, again a contradiction.
Hence $F$ is ample. 

We next claim that $|F|$ is base point free. Indeed, if it is not, then by \cite[(2.7)]{SD}, we would have $F \sim 4E+\Gamma$, for an elliptic pencil $|E|$ and a smooth rational curve $\Gamma$ such that $\Gamma \cdot E=1$. But then, as $F \cdot H=10$, we would have $E \cdot H \leq 2$, so that all smooth curves in $|H|$ would be hyperelliptic, a contradiction. 

We finally claim that $F$ is very ample. Indeed, if it is not, then by \cite[Thm. 5.2]{SD}, there would exist an elliptic pencil $|E|$ such that $E \cdot F=2$.
Set $\Delta:=F-2E$. Then $\Delta^2=-2$ and $\Delta \cdot F=2$, whence $\Delta$ is effective by Riemann-Roch. Since the Clifford index of any smooth $C \in |H|$ is $2$, we must have $E \cdot H \geq 4$. 
From $10=F \cdot H= (2E + \Delta) \cdot H$, we thus find that $E \cdot H=4$ and
$\Delta \cdot H=2$. The Hodge index theorem  implies that $H \sim 3E+2\Delta$, contrary to our assumptions. 

Therefore, $|F|$ defines an embedding of $S$ into $\PP^4$ and its image is well-known to be a complete intersection of a quadric and a cubic. 
  The Euler sequence of the embedding $S \subset \PP^4$ twisted by $\O_S(-2H)$ is
\[
0 \longrightarrow  \O_S(-2H) \longrightarrow  H^0(F)^{\vee} \* \O_S(-F-2D) \longrightarrow   \T_{\PP^4}|_S(-2H)  \longrightarrow  0.\]
The map on cohomology $H^2(\O_S(-2H)) \to H^0(F)^{\vee} \* H^2(\O_S(-F-2D))$ is the dual of the multiplication map of sections 
\[ \mu_{F,F+2D}: H^0(F) \* H^0(\O_S(F+2D)) \to H^0(2H).\]
 Since $h^1(-2D)=0$ and $h^0(F-2D)=0$ (using $H \cdot (F-2D)=-2$), this map is surjective by Mumford's generalization of a theorem of Castelnuovo \cite[Thm. 2, p.~41]{Mum}. Thus $h^i(\T_{\PP^4}|_S(-2H))=0$ for $i=0,1$. From the exact sequence
\[
\xymatrix{ 0 \ar[r] & \T_S(-2H) \ar[r] & \T_{\PP^4}|_S(-2H)  \ar[r] & \N_{S/\PP^4}(-2H) \ar[r] & 0,}
\]
we therefore obtain that 
\begin{eqnarray*}
  h^1(\T_S(-2H))&  =&  h^0(\N_{S/\PP^4}(-2H)) =h^0(2F-2H)+h^0(3F-2H) \\
& = & h^0( -2D) +h^0(F-2D)=0. 
\end{eqnarray*}
It follows that $h^0(\N_{S/\PP^9}(-2))=0$ by Lemma \ref{lemma:trick}.
\end{proof}

\begin{remark} \label{rem:92}
  As seen in the proof, the condition on $H$ in Lemma \ref{lemma:92} can be rephrased as $H$ not being $2$-divisible and $H-D$ being very ample. 
\end{remark}

\begin{lemma} \label{lemma:rest2i}
  Assume that $7 \leq g \leq 9$ and that all $D \in \Pic S$ satisfying the   conditions  in Lemma \ref{lemma:desc2} satisfy $D^2=0$. Let $C \subset S$ be a general hyperplane   section. Then $h^0(\N_{S/\PP^g}(-2))=h^0(\N_{C/\PP^{g-1}}(-2))=0$ except in the following case where
$h^0(\N_{S/\PP^g}(-2))=h^0(\N_{C/\PP^{g-1}}(-2))=1$:

$g=7$ and $H \sim E_1+E_2+E_3$, where $|E_i|$ is an elliptic pencil, $i=1,2,3$, and $E_i \cdot E_j=2$ for $i \neq j$. A general such $S$ is a 
 quadratic section of the sextic Del Pezzo threefold $T \cong \PP^1 \x \PP^1 \x \PP^1$ in its Segre embedding in $\PP^7$; conversely, any such smooth quadratic section is a $K3$ surface satisfying the given properties.
\end{lemma}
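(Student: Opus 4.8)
The plan is to place $S$ inside the four-dimensional rational normal scroll $\Sigma\subset\PP^g$ swept out by the $3$-planes spanned by the members of the elliptic pencil $|D|$ (each an elliptic normal quartic), in complete analogy with the trigonal Lemma \ref{lemma:c=11}. Here $|D|$ is the degree-$4$ elliptic pencil provided by Lemma \ref{lemma:desc2} and Remark \ref{rem:ellpenc}, which exists and satisfies $\O_C(D)=A$ by the standing hypothesis that every $D$ as in Lemma \ref{lemma:desc2} has $D^2=0$. Following \cite[\S\S 6--9]{JK} I would choose $|D|$ so that $\Sigma$ is smooth and satisfies property $N_2$, and so that $S=Y_1\cap Y_2$ is the transverse intersection of two divisors $Y_i\sim 2H-b_i\F$ on $\Sigma$, where $\F$ is the ruling (so $\F|_S\sim D$), with $b_1\geq b_2\geq 0$ and $b_1+b_2=g-5$. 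Then $\N_{S/\Sigma}(-2)\cong\O_S(-b_1D)\oplus\O_S(-b_2D)$, so the normal bundle sequence
\[
\xymatrix{0\ar[r]&\N_{S/\Sigma}(-2)\ar[r]&\N_{S/\PP^g}(-2)\ar[r]&\N_{\Sigma/\PP^g}|_S(-2)\ar[r]&0}
\]
combined with Lemma \ref{lemma:well-known2} gives $h^0(\N_{S/\PP^g}(-2))=h^0(\O_S(-b_1D))+h^0(\O_S(-b_2D))$. As $g\geq 7$ forces $b_1\geq 1$, this equals $1$ if $b_2=0$ and $0$ otherwise. The same integer $b_2$ computes the curve invariant of Proposition \ref{prop:c=2} (the scroll spanned by $|A|$ on $C$ being a general hyperplane section of $\Sigma$, the defining divisors restricting accordingly), so in fact $h^0(\N_{S/\PP^g}(-2))=h^0(\N_{C/\PP^{g-1}}(-2))$ throughout.

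Everything thus reduces to deciding when $b_2=0$. By the displayed equality it suffices to evaluate $h^0(\N_{C/\PP^{g-1}}(-2))$ for a general $C$, which Proposition \ref{prop:c=2} and Remark \ref{rem:biell}(b) handle: the bielliptic case is excluded, so the value is $1$ precisely when $C$ is a quadratic section of one of the surfaces (a) (the anticanonical image of $\PP^2$ blown up in $10-g$ points) or (b) (for $g=9$, the $2$-Veronese image of a quadric) of Proposition \ref{prop:c=2}. The key geometric step is that such a surface $Y_A$ is a hyperplane section of a threefold of which $S$ is then a quadratic section; I would establish this extension from the scroll description above, exactly as in Lemmas \ref{lemma:doppio} and \ref{lemma:92}.

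For $g=8,9$ the relevant threefolds — the degree-$7$ Del Pezzo $\mathrm{Bl}_{\mathrm{pt}}\PP^3$, and for $g=9$ the cone over the anticanonical $\FF_1$ or the $2$-Veronese of $\PP^3$ — each endow $S$ with a line bundle of self-intersection $2$ or $4$, contradicting the hypothesis $D^2=0$; hence $b_2\geq 1$ and both cohomology groups vanish. For $g=7$ the only case-(a) threefold compatible with $D^2=0$ is the Segre $V\cong\PP^1\x\PP^1\x\PP^1$, the flag threefold in $\PP^2\x\PP^2$ being the excluded $D^2=2$ configuration. To pin this down intrinsically I would note that for $g=7$ one has $(H-D)^2=4$ and $h^0(H-D)=4$, so $b_2=0$ is equivalent to $|H-D|$ failing to be birational, i.e.\ to $H-D\sim E_2+E_3$ for elliptic pencils with $E_2\cdot E_3=2$; setting $E_1:=D$ and using that Clifford index two forbids elliptic pencils of degree $\leq 3$, the Hodge index theorem then forces $E_i\cdot E_j=2$ for all $i\neq j$ and $H\sim E_1+E_2+E_3$.

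In this surviving case $S$ is a smooth quadratic section of $V=\PP^1\x\PP^1\x\PP^1$ in its Segre embedding in $\PP^7$; since $\N_{S/V}(-2)\cong\O_S$ and $V$ satisfies $N_2$ (its general curve section being an elliptic normal sextic), the argument of Example \ref{ex:DM} gives $h^0(\N_{S/\PP^7}(-2))=h^0(\N_{C/\PP^6}(-2))=1$, and the converse that every such smooth quadratic section carries the stated divisors is immediate. I expect the main obstacle to lie in the middle steps above: rigorously promoting the surface-level classification of Proposition \ref{prop:c=2} to a threefold extension of $S$, and then carrying out the Hodge index computations that exclude $g=8,9$ and isolate the Segre threefold (equivalently, the direct determination of $b_2$ by lattice theory on $S$), since $b_2$ is not determined by the numerical class of $S$ in $\Sigma$ alone.
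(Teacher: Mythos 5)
Your reduction to the invariant $b_2$ is essentially the paper's: the paper likewise realizes $S$ as a complete intersection $Y_1\cap Y_2$ of divisors $Y_i\in|\O_T(2)(-b_i\F)|$ on the smooth fourfold scroll (via \cite[\S 5 and Prop.~7.2]{JK}, using the vanishing $h^1(H-2E)=0$, which itself relies on the hypothesis that no $D$ with $D^2>0$ exists) and applies Lemma \ref{lemma:well-known2} to conclude that $h^0(\N_{S/\PP^g}(-2))=h^0(\N_{C/\PP^{g-1}}(-2))$ equals $0$ or $1$ according to whether $b_2>0$ or $b_2=0$; it merely runs the normal bundle sequence through $Y_1$ rather than through the scroll, which is equivalent since $h^0(\O_S(-b_1E))=0$. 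Where you genuinely diverge is in deciding when $b_2=0$: the paper works entirely on $S$, computing $h^0(H-kE)$ by Riemann--Roch and lattice arguments to pin down the scroll type and then invoking \cite[Lemma 1.9]{ste} or \cite[Prop.~5.4]{Br} to constrain $(b_1,b_2)$, whereas you propagate the curve-level classification of Proposition \ref{prop:c=2} back to $S$ through a threefold extension and then contradict the lattice hypothesis. This is a legitimate and arguably more conceptual alternative, but the two steps you yourself flag as "the main obstacle" are exactly where the substance lies, and as written they are gaps.

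Concretely: (1) for $g=8,9$ it is not enough that the threefold "endows $S$ with a line bundle of self-intersection $2$ or $4$" --- the hypothesis only excludes divisors satisfying \emph{all} the conditions of Lemma \ref{lemma:desc2} (global generation, $h^i(D)=h^i(H-D)=0$, adaptation to a $g^1_4$), so these must be verified for the induced divisor; moreover the extension of $Y_A$ need not be the smooth Del Pezzo threefold but may be a cone over $Y_A$ (for $g=8$ as well as $g=9$), in which case the divisor is obtained by projecting from the vertex onto $Y_A$ and pulling back, a case your sketch omits for $g=8$. These verifications do go through, but they amount to lattice computations of roughly the same weight as the paper's direct determination of the scroll type. (2) Your $g=7$ pivot, "$b_2=0$ if and only if $|H-D|$ fails to be birational," is asserted rather than proved: the forward direction needs the classification of the degree-six threefold $Y_1$ (Segre, flag, or cone) and the backward direction a separate argument; the paper instead extracts the three elliptic pencils from the count of $g^1_4$'s on $C$ via \cite[Prop.~3.4(d)]{KL-gauss} and then applies the Hodge index theorem. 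So the skeleton is right and the route through Proposition \ref{prop:c=2} is genuinely different, but the middle of the argument is not yet carried out.
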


\begin{proof}
Pick any  $D$ satisfying the conditions in Lemma \ref{lemma:desc2} and call it $E$. Then $E^2=0$ and $E \cdot H=4$ by assumption, and $|E|$ is an elliptic pencil, cf. Remark \ref{rem:ellpenc}. As in the case of Clifford index one, it is proved in \cite[\S 5]{JK} that one can find an $E$ such that the four-dimensional rational normal scroll $T \subset \PP^g$ swept out by the span of the members of $|E|$ in $\PP^g$ is smooth (of degree $g-3$), and furthermore such that
  \begin{equation}
    \label{eq:h1R}
   h^1(H-2E)=0,
  \end{equation}
the latter by \cite[Prop. 5.5]{JK}, noting that the exceptional cases labeled (E0)-(E4) in \cite[Prop. 5.5]{JK} do not occur for ample $H$. (Here the assumption about nonexistence of divisors $D$ as in Lemma \ref{lemma:desc2} with $D^2>0$ plays a central role, as we now briefly recall for the sake of the reader: if by contradiction $h^1(H-2E)>0$, then, as $(H-2E)^2 =2g-18 \geq -4$, we have $h^0(H-2E) =\chi(H-2E)+h^1(H-2E) \geq 1$, whence $H-2E$ is effective and not numerically $1$-connected. Therefore, we have a nontrivial effective decomposition $H-2E \sim A_1+A_2$ with $A_1 \cdot A_2 \leq 0$. One may check that $E+A_i$ for $i=1$ or $2$ satisfies the conditions in Lemma \ref{lemma:desc2} and $(E+A_i)^2>0$, a contradiction.) 
Moreover, by \cite[Prop. 7.2 and \S~9.2]{JK}, or \cite[\S 4]{Br} or \cite[\S 1.7]{ste}, the surface $S \subset \PP^g$ is 
a complete intersection of two threefolds
\begin{equation} \label{eq:ciinT}
 S = Y_1 \cap Y_2, \; \; \mbox{with} \; \; Y_i \in |\O_T(2)(-b_i\F)|, \; \; b_1 \geq b_2 \; \mbox{and} \; b_1+b_2=g-5,
\end{equation}
where, as before, $\F$ is the class of the ruling of $T$.

  The normal bundle sequence of $S \subset Y_1 \subset \PP^g$ twisted by $-2$ yields
\[
\xymatrix{ 0 \ar[r] & \N_{S/Y_1}(-2) \ar@{=}[d]^{\wr} \ar[r] & \N_{S/\PP^{g}}(-2) \ar[r] & \N_{Y_1/\PP^{g}}|_S(-2) \ar[r] & 0 \\
& \O_S(-b_2E)  & & & 
}\]
(using that $Y_1$ is smooth along $S$). Restricting to $C$ we obtain
\[
\xymatrix{ 0 \ar[r] & \O_C(-b_2E)  \ar[r] & \N_{C/\PP^{g-1}}(-2) \ar[r] & \N_{Y_1/\PP^{g}}|_C(-2) \ar[r] & 0}.\]
The threefold $Y_1$ satisfies property $N_2$ by Green's hyperplane section theorem \cite[Thm. 3.b.7]{gr}, since its general hyperplane section does by \cite[Lemma 2.16]{KL-gauss}. Thus, we have $h^0(\N_{Y_1/\PP^{g}}|_S(-2))=h^0(\N_{Y_1/\PP^{g}}|_C(-2))=0$ by 
Lemma \ref{lemma:well-known2}. Hence
\[ h^0(\N_{S/\PP^{g}}(-2))=h^0(\O_S(-b_2 E)) \; \; \mbox{and} \; \; h^0(\N_{C/\PP^{g-1}}(-2))=h^0(\O_C(-b_2 E)).\]
It follows that 
\begin{equation}
  \label{eq:tetra}
  h^0(\N_{S/\PP^{g}}(-2))=h^0(\N_{C/\PP^{g-1}}(-2))=\begin{cases} 0 & \mbox{if} \; b_2>0, \\
1 & \mbox{if} \; b_2=0.
\end{cases}
\end{equation}
 The possible values of $b_2$ (and $b_1$), and the possible scroll types $(e_1,e_2,e_3,e_4)$, with $e_1 \geq e_2 \geq e_3 \geq e_4 >0$ (as $T$ is smooth) have been investigated in \cite{Br,ste,JK}, with some minor mistakes in the former. Recall that $e_1+e_2+e_3+e_4=g-3$. We repeat the study of the case $g=9$ for the sake of the reader.

If $g=9$, we have $b_1+b_2=4$ and $e_1+e_2+e_3+e_4=6$. We may use Riemann-Roch to compute $h^0(H-E)=6$, as $h^1(H-E)=0$ by Lemma \ref{lemma:desc2}, and $h^0(H-2E)=2$, using \eqref{eq:h1R}. As $H \cdot (H-4E)=0$, we get $h^0(H-4E)=0$. We claim that $h^0(H-3E) \leq 1$. Indeed, if $h^0(H-3E) \geq 2$, write $|H-3E|=|M|+\Delta$, with $|M|$ the moving part and $\Delta$ the fixed part. 
Since
$(H-3E)^2=-8$ and $H \cdot (H-3E)=4$, we have $\Delta >0$ and $M \cdot H \leq 3$, so that $|M|$ would induce a $g^1_3$ on all curves in $|H|$, a contradiction. 
This yields the two possible scroll types $(2,2,1,1)$ and $(3,1,1,1)$ (cf. \cite[\S 9.2.2 and table on p.~148]{JK}). In the latter case, \cite[Lemma 8.33]{JK}, \cite[Lemma 1.9]{ste} or \cite[Prop. 5.4]{Br} yields $b_1=b_2=2$ (the reason being that any section of $\O_T(2)(-b\F)$ with $b \geq 3$ is a product of a section of $\O_T(1)(-b\F)$ and a section of $\O_T(1)$). In the former case, \cite[Lemma 1.9]{ste} or \cite[Prop. 5.4]{Br} (or the discussion in \cite[\S 9.2.2]{JK}) yields $b_1=2$ or $3$, whence $b_2 >0$ (the reason being that the zero scheme of any section of $\O_T(2)(-4\F)$ restricts to two lines in each fiber of $T$). In all cases we therefore have 
$h^0(\N_{S/\PP^{9}}(-2))=h^0(\N_{C/\PP^{8}}(-2))=0$ by \eqref{eq:tetra}.

If $g=8$, we have $b_1+b_2=3$ and similar considerations as in the previous case yield that the scroll type must be $(2,1,1,1)$ (cf. \cite[\S 9.2.2 and table on p.~146]{JK}). By \cite[Lemma 1.9]{ste} or \cite[Prop. 5.4]{Br} (or the discussion in \cite[\S 9.2.2]{JK}), we have $b_1=2$ and $b_2=1$. 
Hence $h^0(\N_{S/\PP^{8}}(-2))=h^0(\N_{C/\PP^{7}}(-2))=0$ by \eqref{eq:tetra}.
 
If $g=7$, we have $b_1+b_2=2$ and the scroll type must be $(1,1,1,1)$ (cf.  \cite[\S 9.2.2 and table on p.~144-145]{JK}). By \cite[Lemma 1.9]{ste}, \cite[Lemma 8.33]{JK} or \cite[Prop. 5.4]{Br} we must have $b_1 \leq 2$, whence the two possibilities $(b_1,b_2)=(1,1)$ or $(2,0)$. Both cases occur by  \cite[Thm. 5.3]{Br}, with $h^0(\N_{S/\PP^{8}}(-2))=h^0(\N_{C/\PP^{7}}(-2))=0$ and $1$, respectively, by \eqref{eq:tetra}. Let us now consider the second case more thoroughly. The general curves $C \in |H|$ are contained in hyperplane sections of the threefold $Y_2 \in |\O_T(2)|$, which are the surfaces $Y_A \subset \PP^6$ appearing in the proof of Proposition \ref{prop:c=2}. The only possibility is that $Y_A$ is the blow up of $\PP^2$ in three (possibly infinitely near) points, cf. Remark \ref{rem:biell}(b). Hence $C$ has precisely three linear systems of type $g^1_4$ by \cite[Prop. 3.4(d)]{KL-gauss}. As we are assuming that the only line bundles $D$ satisfying the conditions in Lemma \ref{lemma:desc2} are the ones with square zero, then there must exist three elliptic pencils $|E_i|$, $i=1,2,3$, with $E_1=E$, say,  on $S$ inducing these three linear systems on $C$; in particular, $E_i \cdot H=3$. An easy application of the Hodge index theorem yields $E_i \cdot E_j \leq 2$ for $i \neq j$, and clearly equality must hold, as otherwise we would have moving linear systems of degree one on an elliptic curve. It is an easy exercise to check that
$H \sim E_1+E_2+E_3$ and to check the remaining assertions. 
\end{proof}

\begin{lemma} \label{lemma:rest2ii}
  Assume that $g=9$ and $H \sim 3E+2\Delta$, where $|E|$ is an elliptic pencil and $\Delta$ is an effective divisor such that $\Delta^2=-2$ and $\Delta \cdot E=2$. Then $h^0(\N_{S/\PP^g}(-2))=h^0(\N_{C/\PP^{g-1}}(-2))=1$. 

Moreover, a general such $S$ is a 
 quadratic section of the cone over the anticanonical embedding of $\FF_1$ into
$\PP^8$; conversely, any such smooth quadratic section is a $K3$ surface satisfying the given properties.
\end{lemma}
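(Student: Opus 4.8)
The plan is to run the strategy already used for the other quadratic-section cases: locate the divisor computing the Clifford index, bound $h^0(\N_{S/\PP^9}(-2))$ from above by the curve case, and then realize a general such $S$ as a quadratic section of a threefold $W$ satisfying property $N_2$ so that Lemma \ref{lemma:well-known2} forces the value to be $1$. Set $D:=E+\Delta$ and $F:=H-D=2E+\Delta$; a direct computation gives $D^2=2$, $D\cdot H=6$ and $F^2=6$, so that $D$ is the divisor of Lemma \ref{lemma:desc2} computing the Clifford index of every smooth $C\in|H|$, and $D\cdot\Delta=0$. Since a general $C\in|H|$ is tetragonal of genus $9$ (the pencil $|E|$ cutting a $g^1_4$, cf. Remark \ref{rem:ellpenc}) and hence not a smooth plane quintic, Proposition \ref{prop:c=2} already yields $h^0(\N_{C/\PP^8}(-2))\le 1$, and therefore $h^0(\N_{S/\PP^9}(-2))\le 1$ by \eqref{eq:SCk2}, for every $S$ as in the statement.

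The heart of the argument is producing the model for a general such $S$. For such $S$ the divisor $D$ is globally generated, and, because $D^2=2$ and $h^0(\O_S(D))=3$, the system $|D|$ realizes $\phi_D\colon S\to\PP^2$ as a double cover contracting the $(-2)$-curve $\Delta$; consequently its branch locus is a sextic with one node, and $\Delta$ is the exceptional curve over that node. Blowing up $\PP^2$ at the node turns $\phi_D$ into a finite double cover $\pi\colon S\to\FF_1$ branched along a smooth $B\in|-2K_{\FF_1}|$, and one checks $\pi^*(-K_{\FF_1})=3D-\Delta=3E+2\Delta\sim H$. Thus, under the deck involution, the $10$-dimensional space $H^0(\O_S(H))$ decomposes as $\pi^*H^0(-K_{\FF_1})$ (of dimension $9$) plus a one-dimensional anti-invariant summand $\CC\cdot s$; projecting $\phi_H(S)\subset\PP^9$ from the point corresponding to $s$ recovers the anticanonical map $\phi_{-K_{\FF_1}}\circ\pi$. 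Hence $\phi_H(S)$ lies on the cone $W\subset\PP^9$ over the anticanonically embedded $\FF_1\subset\PP^8$, and the defining relation $s^2=\pi^*b$ of the double cover ($b$ being the quadric defining $B$) exhibits $S$ as cut out on $W$ by a single quadric. In particular $S$ avoids the vertex, $W$ is smooth along $S$, and $\N_{S/W}(-2)\cong\O_S$.

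To finish, note that $W$ satisfies property $N_2$: exactly as in Example \ref{ex:DM}, its general hyperplane section is the anticanonical $\FF_1\subset\PP^8$ (a del Pezzo surface of degree $8$, whose own general curve section is an elliptic normal curve of degree $8$), so $N_2$ passes up to $W$ via Green's hyperplane section theorem \cite[Thm.~3.b.7]{gr}. Applying Lemma \ref{lemma:well-known2} to the twisted normal bundle sequence
\[
0 \longrightarrow \N_{S/W}(-2)\cong\O_S \longrightarrow \N_{S/\PP^9}(-2) \longrightarrow \N_{W/\PP^9}|_S(-2) \longrightarrow 0
\]
gives $h^0(\N_{S/\PP^9}(-2))=h^0(\O_S)=1$ for general $S$. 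Since the K3 surfaces with $H\sim 3E+2\Delta$ form an irreducible (lattice-polarized) family and $h^0(\N_{S/\PP^9}(-2))\le 1$ holds on all of it, upper semicontinuity forces $h^0(\N_{S/\PP^9}(-2))=1$ for \emph{every} such $S$; squeezing this against $h^0(\N_{S/\PP^9}(-2))\le h^0(\N_{C/\PP^8}(-2))\le 1$ from \eqref{eq:SCk2} then also gives $h^0(\N_{C/\PP^8}(-2))=1$. The converse identification (any smooth quadratic section of $W$ is a K3 with the stated numerical data) is routine, as in Lemma \ref{lemma:doppio}.

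I expect the main obstacle to be the geometric identification in the second paragraph: constructing the finite double cover $\pi\colon S\to\FF_1$ out of $|D|$ and verifying $H\sim\pi^*(-K_{\FF_1})$, so that $\phi_H(S)$ genuinely lands on the cone $W$ with $S$ a quadratic section. This is precisely the step that upgrades the purely numerical hypothesis $H\sim 3E+2\Delta$ into the explicit model needed to invoke Lemma \ref{lemma:well-known2}; the remaining cohomological bookkeeping and the semicontinuity reduction are then formal.
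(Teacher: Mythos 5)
Your argument is correct in substance but follows a genuinely different route from the paper. The paper works with the four-dimensional scroll $T_0$ swept out by the spans of the members of $|E|$: it computes $h^0(H-iE)$ to find the (singular) scroll type $(3,2,1,0)$, writes $S$ as a complete intersection of two divisors $Y_i\in|\O_T(2)(-b_i\F)|$ on the desingularization, uses the $g^2_6$ cut out by $|E+\Delta|$ on $C$ together with Schreyer's classification to force $b_2=0$, $b_1=4$, and then reads off the answer from \eqref{eq:tetra}; the identification of $Y_1$ as the cone over the anticanonical $\FF_1$ comes out at the end. You instead build the model directly: $|E+\Delta|$ exhibits a general $S$ as a double cover of $\FF_1$ branched in $|-2K_{\FF_1}|$, the eigenspace decomposition of $H^0(H)=\pi^*H^0(-K_{\FF_1})\oplus\CC s$ with $s^2=\pi^*b$ places $\phi_H(S)$ on the cone $W$ as a quadratic section, and the $N_2$ property of $W$ plus Lemma \ref{lemma:well-known2} give $h^0(\N_{S/\PP^9}(-2))=1$ exactly as in Example \ref{ex:DM}. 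Both proofs ultimately rest on the same engine (a threefold through $S$ satisfying $N_2$ of which $S$ is a quadratic section), and your construction is arguably more transparent; what the paper's scroll argument buys is uniformity: it applies verbatim to \emph{every} $S$ satisfying the hypotheses, whereas your explicit double-cover model is only justified for general $S$ (you need $E+\Delta$ base point free, the branch sextic to have exactly one node, etc.), so you must close the gap with the irreducibility of the lattice-polarized family plus upper semicontinuity. That step is sound in outline --- the rank-two lattice $\langle E,\Delta\rangle$ is automatically primitive in $\Pic S$, the ample-$M$-polarized moduli space is irreducible, and the hypotheses of the lemma cut out an open subset of it --- but it is doing real work and deserves more than the parenthetical ``(lattice-polarized)'': you should either cite the connectedness of the relevant moduli space or, as the paper does, run an argument valid for all $S$ at once. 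The remaining ingredients (the bound $h^0(\N_{C/\PP^8}(-2))\le 1$ from Proposition \ref{prop:c=2}, the squeeze via \eqref{eq:SCk2} to get the value $1$ on the curve, and the descent of $N_2$ to the cone via Green's hyperplane section theorem) are all used correctly and match the paper's own conventions.
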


\begin{proof}
 Both $E$ and 
$E+\Delta$ satisfy the conditions for the divisor $D$ in Lemma \ref{lemma:desc2}.
Contrary to the previous proof, the four-dimensional rational normal scroll $T_0 \subset \PP^g$ defined by the span of the members of $|E|$ in $\PP^g$ is singular. Indeed, one easily computes 
\[ h^0(H-E)=6, \; \; h^0(H-2E)=3, \; \; h^0(H-3E)=1, \; \; h^0(H-4E)=0,\]
so the resulting scroll type is $(3,2,1,0)$. A general hyperplane section of it
is a rational normal scroll $X$ of type $(3,2,1)$, by \cite[Thm. 2.4]{Br}, whence  $T_0$ is a cone over $X$. The scroll $X$ is defined by the spans of the members of the induced $g^1_4$ on a general hyperplane section $C$ of $S$. Denote by $T:=\PP(\E) \to T_0$, with $\E:=\O_{\PP^1}(3) \+\O_{\PP^1}(2)\+\O_{\PP^1}(1)\+\O_{\PP^1}$ the desingularization of the scroll $T_0$. Note that $S$ does not intersect the vertex of $T_0$, as $|E|$ is base point free, and that \eqref{eq:ciinT} and \eqref{eq:tetra} still hold (with $g=9$). Restricting to $X$, we get the two surfaces in $|\O_X(2)(-b_1\F)|$ and $|\O_X(2)(-b_2\F)|$ of which $C$ is a complete intersection, as in the proof of Proposition \ref{prop:c=2}. We now observe that the curve $C$ lies on $\FF_1$ linearly equivalent to twice the anticanonical section; indeed $|E+\Delta|$ defines a $g^2_6$ on $C$, mapping it to a plane sextic curve with one ordinary singular point, for reasons of genus; blowing up the plane in the singular point yields the desired embedding. Hence, by \cite[(6.2)]{sc}, we must have $b_2=0$, whence $b_1=4$. Thus, $S$ is a quadratic section of a threefold $Y_1 \in |\O_T(2)(-4\F)|$, and $C$ is a quadratic section of a hyperplane section of $Y_1$, which is the anticanonical embedding of $\FF_1$, as $C$ carries a $g^2_6$, cf. Remark \ref{rem:g=9}.  As the latter is well-known to be nonextendable,  $Y_1$ must be the cone over it. (Alternatively, one may check as in \cite{ste} that the base locus of $|\O_T(2)(-4\F)|$ is the inverse image of the vertex of $T_0$.) 
The rest of (ii) is easily verified.
\end{proof}

\section{Proof of Proposition \ref{prop:main2} and final remarks} \label{sec:K3}

The results in the two previous section are enough to deduce Proposition \ref{prop:main2}. We summarize for the sake of the reader.

\renewcommand{\proofname}{Proof of Proposition \ref{prop:main2}}

\begin{proof}
  Let $S \subset \PP^g$ be a smooth  $K3$ surface of degree $2g-2$, with $g \geq 5$. Let $c$ be the Clifford index of all smooth hyperplane sections of $S$, recalling that it is constant by \cite{gl}, and that $c >0$ by \cite{SD}. 

If $g=5$ or $6$, then $c=1$ or $2$, and the result follows from Lemma \ref{lemma:c=11}(i-ii) if $c=1$ and Example \ref{ex:old} if $c=2$.

Assume now that $g \geq 7$ and that $h^0(\N_{S/\PP^g}(-2)) \neq 0$. By \eqref{eq:Sk2} and Corollary \ref{cor:well-knownK3}, we have $g \leq 10$ and $c=1$ or $2$.

If $c=1$, then Lemma \ref{lemma:c=11} yields case (I) as the only case where one has  $h^0(\N_{S/\PP^g}(-2)) \neq 0$. 

We are therefore left with the cases $c=2$ and $7 \leq g \leq 10$.
The Donagi-Morrison example \ref{ex:DM} is case (VII), so we may henceforth assume that we are not in this case. Let $D$ be a divisor satisfying the conditions in Lemma \ref{lemma:desc2}. If $\O_S(1) \sim 2D$ (whence $g=9$ and $D^2=4$), Lemma \ref{lemma:doppio}(i) yields case (V). We may henceforth assume that $\O_S(1)$ is not $2$-divisible and that $D^2=0$ or $2$, with the latter implying $g \leq 9$.

If $D^2=2$ and $g=7$ or $8$, then Lemma \ref{lemma:doppio}(ii) yields cases (III) and (IV). If $D^2=2$ and $g=9$, then Lemmas \ref{lemma:92} and \ref{lemma:rest2ii}  yield case (VI) as the only case where  $h^0(\N_{S/\PP^g}(-2)) \neq 0$. 

Finally, assume that the only divisors $D$ as in Lemma \ref{lemma:desc2} satisfy $D^2=0$. Then Lemma \ref{lemma:rest2i} yields case (II) as the only case where  one has $h^0(\N_{S/\PP^g}(-2)) \neq 0$. 
\end{proof}

\renewcommand{\proofname}{Proof}

A thorough look at the cases above show that we have also classified the cases where $h^0(\N_{S/\PP^g}(-2)) \neq h^0(\N_{C/\PP^{g-1}}(-2))$:

\begin{proposition} \label{prop:main3} 
  Let $S \subset \PP^g$ be a smooth  $K3$ surface of degree $2g-2$, with $g \geq 5$, all of whose hyperplane sections have Clifford index $c$. Let $C \subset S$ be a general hyperplane section. Then $h^0(\N_{S/\PP^g}(-2))=h^0(\N_{C/\PP^{g-1}}(-2))$ except precisely in the following cases:
  \begin{itemize}
  \item[(a)]  $(g,c)=(6,1)$. Then $h^0(\N_{S/\PP^6}(-2))=1$ and 
$h^0(\N_{C/\PP^{5}}(-2))=2$;
   \item[(b)] $(g,c)=(7,1)$, and $H$ is not linearly equivalent to $3E+\Gamma_1+\Gamma_2+\Gamma_3$, where $|E|$ is an elliptic pencil and $\Gamma_1,\Gamma_2,\Gamma_3$ are disjoint lines. Then $h^0(\N_{S/\PP^7}(-2))=0$ and 
$h^0(\N_{C/\PP^{7}}(-2))=1$; 
\item[(c)] $(g,c)=(9,2)$, and $H$ is as in Lemma \ref{lemma:92}.
  \end{itemize}
\end{proposition}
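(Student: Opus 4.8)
The plan is to deduce Proposition \ref{prop:main3} directly from the case-by-case analysis already carried out in proving Proposition \ref{prop:main2}, rather than to run a fresh argument. Throughout the proof of the various lemmas in Sections \ref{sec:c=1} and 4, the two quantities $h^0(\N_{S/\PP^g}(-2))$ and $h^0(\N_{C/\PP^{g-1}}(-2))$ were computed \emph{simultaneously} for each case, so the discrepancies between them are already visible in the statements of those lemmas. My task is therefore mainly bookkeeping: I would go through the two possible Clifford index values $c=1,2$, compare the surface and curve computations in each range of $g$, and record exactly when they differ.

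First I would treat $c=1$ via Lemma \ref{lemma:c=11}. For $g \geq 10$ and in all the subcases where the scroll-type computation forces $h^0(H-(g-4)E)=0$, both invariants vanish, so there is nothing to record. The discrepancies arise precisely in low genus: for $g=5$ both equal $3$, hence agree; for $g=6$ one has $h^0(\N_{S/\PP^6}(-2))=1$ but $h^0(\N_{C/\PP^5}(-2))=2$, giving case (a); and for $g=7$ the curve always satisfies $h^0(\N_{C/\PP^6}(-2))=1$, whereas the surface invariant is $1$ exactly when $H \sim 3E+\Gamma_1+\Gamma_2+\Gamma_3$ and $0$ otherwise, so the strict inequality holds precisely in the complementary case, which is case (b).

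Next I would treat $c=2$, invoking Example \ref{ex:old}, Example \ref{ex:DM}, and Lemmas \ref{lemma:doppio}, \ref{lemma:92}, \ref{lemma:rest2i} and \ref{lemma:rest2ii}. The key observation is that in every one of these statements the surface and curve invariants were asserted to be \emph{equal} --- $h^0(\N_{S/\PP^g}(-2))=h^0(\N_{C/\PP^{g-1}}(-2))$ --- with the single exception of Lemma \ref{lemma:92}, where $h^0(\N_{S/\PP^9}(-2))=0$ but $h^0(\N_{C/\PP^8}(-2))=1$. This is case (c). I would also note that for $g=5,6$ with $c=2$ (Example \ref{ex:old}) the two invariants agree, and that the Donagi--Morrison situation of Example \ref{ex:DM} likewise yields equality; together with the fact that the classification in the proof of Proposition \ref{prop:main2} is exhaustive, this shows no further discrepancies can occur.

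I do not expect a genuine obstacle here, since the heavy lifting was already done in the preceding lemmas; the only care needed is to confirm that the enumeration of cases is complete, i.e.\ that every $(g,c)$ with $g \geq 5$ has been accounted for by one of the cited results, so that the listed cases (a)--(c) are exactly the ones where strict inequality holds. The mildly delicate point is simply to verify that in the trigonal $g=8,9$ and tetragonal $g=8,9$ subcases, where both invariants turned out to vanish, the agreement is genuine rather than coincidental; but this is immediate from \eqref{eq:trig} and \eqref{eq:tetra} together with the corresponding curve computations from Propositions \ref{prop:c=1} and \ref{prop:c=2}, which were carried out in parallel.
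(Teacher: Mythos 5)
Your proposal is correct and follows essentially the same route as the paper: the published proof likewise just observes that among all the cases treated in the preceding lemmas, the only discrepancies between $h^0(\N_{S/\PP^g}(-2))$ and $h^0(\N_{C/\PP^{g-1}}(-2))$ occur in Lemma \ref{lemma:c=11}(ii)--(iii) (cases (a) and (b)) and in Lemma \ref{lemma:92} (case (c)). Your extra care about exhaustiveness of the case enumeration and about the parallel vanishing in the trigonal and tetragonal $g=8,9$ subcases is sound and consistent with the paper.
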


\begin{proof}
  Among all the cases we have considered, the only ones where we have $h^0(\N_{S/\PP^g}(-2)) \neq h^0(\N_{C/\PP^{g-1}}(-2))$, are the ones in Lemma \ref{lemma:c=11}(ii-iii), yielding cases (a) and (b), and the one in Lemma \ref{lemma:92}.
\end{proof}

%
%


\begin{thebibliography}{[FKP3]}

 \bibitem{AF} M.~Aprodu, G.~Farkas, {\it The Green Conjecture for smooth curves lying on arbitrary $K3$ surfaces}, 
Compos. Math. {\bf 147} (2011), 839--851. 


\bibitem{abs} E.~Arbarello, A.~Bruno, E.~Sernesi, {\it On hyperplane sections of $K3$ surfaces}, Alg. Geom. {\bf 4} (2017), 562--596.



 \bibitem{BEL} A.~Bertram, L.~Ein, R.~Lazarsfeld, {\it Surjectivity of Gaussian maps for line bundles of large
degree on curves}. In: Algebraic geometry (Chicago, IL, 1989), Lecture Notes
in Math. {\bf 1479}, 15--25, Springer, Berlin 1991.


\bibitem{BM} A.~Beauville, J.-Y.~M{\'e}rindol, {\it Sections hyperplanes des surfaces $K3$}, Duke Math. J. {\bf 55} (1987), 873--878.

\bibitem{Br} J.~Brawner, {\it Tetragonal Curves, scrolls and $K3$ surfaces}, Trans.
Am. Mat. Soc. {\bf 349} (1997), 3075--3091.



\bibitem{cds} C.~Ciliberto, T.~Dedieu, E.~Sernesi, {\it 
    Wahl maps and extensions of canonical curves and K3 surfaces}, to appear in J. Reine Angew. Math., doi:10.1515/crelle-2018-0016.


\bibitem{chm} C.~Ciliberto, J.~Harris, R.~Miranda, {\it On the surjectivity of the Wahl map}, Duke Math. J. {\bf 57} (1988), 829--858.



\bibitem{clm-class} C.~Ciliberto, A.~F.~Lopez, R.~Miranda, {\it Classification of varieties with canonical curve section
via Gaussian maps on canonical curves}, Amer. J. Math. {\bf 120} (1998), 1--21.

  \bibitem{clm} C.~Ciliberto, A.~F.~Lopez, R.~Miranda, {\it Projective degenerations of $K3$ surfaces,
 Gaussian maps and Fano threefolds}, Invent. Math. {\bf 114} (1993), 641--667.

\bibitem{CM1} C.~Ciliberto, R.~Miranda, {\it On the Gaussian map for canonical curves of low genus}, Duke Math. J. {\bf 61} (1990), 417--443. 

 \bibitem{CM2} C.~Ciliberto, R.~Miranda, {\it Gaussian maps for certain families of canonical curves}. In {\it Complex Projective Geometry}, London Math. Soc. Lecture Notes Ser. {\bf 179} Cambridge University Press, Cambridge (1992), 106--127.



\bibitem{cp} C.~Ciliberto, G.~Pareschi, {\it Pencils of minimal degree on curves on a $K3$ surface}, J. Reine Angew. Math. {\bf 460} (1995), 15--36.


\bibitem{CM} M.~Coppens, G.~Martens, {\it Linear series on $4$-gonal curves},  
Math. Nachr. {\bf 213} (2000), 35--55.







\bibitem{DM} R.~Donagi, D.~R.~Morrison, {\it Linear systems on $K3$ sections}, 
J. Differential Geom. {\bf 29} (1989), 49--64.

\bibitem{DS} R.~Drewes, J.~Stevens, {\it Deformations of cones over canonical trigonal curves}, Abh. Math. Sem. Univ. Hamburg {\bf 66} (1996), 289--315.



\bibitem{elms} D.~Eisenbud, H.~Lange, G.~Martens, F.-O.~Schreyer, {\it The Clifford dimension of a
projective curve}, Compos. Math. {\bf 72} (1989), 173--204.






 \bibitem{gr} M.~Green, {\it Koszul cohomology and the geometry of projective varieties}, J. Differential Geom. {\bf 19} (1984), 125--171.


 \bibitem{gl} M.~Green, R.~Lazarsfeld, {\it Special divisors on curves on a
$K3$ surface}, Invent. Math.
{\bf 89} (1987), 357--370. 






\bibitem{JK} T.~Johnsen, A.~L.~Knutsen, {\it 
   $K3$ projective models in scrolls}, Springer Lecture Notes in Mathematics
{\bf 1842}, 2004.


\bibitem{Kn} A.~L.~Knutsen, {\it
On $k$th-order embeddings of
$K3$ surfaces and
Enriques surfaces}, Manuscr.Math. {\bf 104} (2001), 211--237.



 \bibitem{Kn2} A.~L.~Knutsen, {\it On two conjectures for curves on K3 surfaces}, International J. Math. {\bf 20} (2009), 1547--1560.


\bibitem{KL-gauss} A.~L.~Knutsen, A.~F.~Lopez, {\it Surjectivity of Gaussian maps for curves
on Enriques surfaces}, Adv. Geom. {\bf 7} (2007), 215--247.

\bibitem{KLV} A.~L.~Knutsen, M.~Lelli-Chiesa, A.~Verra, {\it Moduli of non-standard Nikulin surfaces in low genus},  arXiv:1802.01201.



\bibitem{Lvov} S.~Lvovsky, {\it Extensions of projective varieties and deformations. I.} Michigan
Math. J. {\bf 39} (1992), 41--51.

\bibitem{Ma} G.~Martens, {\it On curves on $K3$ surfaces}, Springer Lecture Notes in Mathematics
{\bf 1398} (1989), pp. 174-182.

\bibitem{mar-hir} G.~Martens, {\it The gonality of curves on a Hirzebruch surface}, Archiv der Mathematik {\bf 67} (1996), 349--352.



\bibitem{Mu} S.~Mukai, {\it New development of theory of Fano $3$-folds: vector bundle
method and moduli problem}, Sugaku {\bf 47} (1995), 125--144.

\bibitem{Mum} D.~Mumford, {\it Varieties defined by quadratic equations}. In: Questions on Algebraic Varieties. C.I.M.E. Summer Schools, vol {\bf 51}. Springer, Berlin, Heidelberg.





\bibitem{Na} M.~Nagata, {\it On rational surfaces. I. Irreducible curves of arithmetic genus $0$ or $1$}, Mem. Coll.
Sci. Univ. Kyoto Ser. A Math. {\bf 32} (1960), 351--370.


\bibitem{Pi} H.~Pinkham, {\it Deformations of algebraic varieties with $\mathbb{G}_m$-action}, Ast{\'e}risque {\bf 20}, Soc. Math. France, 1974. 





\bibitem{Reid0} M.~Reid, {\it Chapters on Algebraic Surfaces}. In: Complex algebraic geometry
(Park City 1993), IAS/Park City Math. Ser. 3, Amer. Math. Soc., 1997,
pp. 3--159.


\bibitem{Reid} M.~Reid, {\it Special linear systems on curves lying on a $K3$ surface}, J. London Math. Soc. {\bf 13} (1976), 
454--458.

\bibitem{sagra} M.~Sagraloff, {\it  Special Linear Series and Syzygies of Canonical Curves of Genus $9$}, PhD-thesis, Universit{\"a}t des Saarlandes, 2006, arXiv:math/0605758.


\bibitem{SD} B.~Saint-Donat, {\it Projective models of $K-3$ surfaces}, Amer. J. Math. {\bf 96} (1974), 602--639.





\bibitem{sc} F.-O.~Schreyer, {\it Syzygies of canonical curves and special linear series}, Math. Ann. {\bf 275} (1986),
105--137.


\bibitem{ste} J.~Stevens, Rolling factors deformations and extensions of canonical
curves, Doc. Math. {\bf 6} (2001), 185--226.

\bibitem{Te} S.~Tendian, {\it Gaussian maps on trigonal and tetragonal curves}. Unpublished.

\bibitem{Te-duke} S.~Tendian, {\it Surfaces of degree $d$ with sectional genus $g$
in $\PP^{d+1−g}$
and deformations of cones},  Duke Math. J. {\bf 65} (1992), 157--185.

\bibitem{Te2} S.~Tendian, {\it The Gaussian map and deformations of cones}. Preprint.


\bibitem{Vo} C.~Voisin, {\it Courbes t{\'e}tragonales et cohomologie de Koszul}, J. Reine Angew. Math. {\bf 387} (1988), 111--121. 


\bibitem{W1} J.~Wahl, {\it Deformations of quasi-homogeneous surface singularities}, Math. Ann. {\bf 280} (1988),  105--128. 

\bibitem{W3} J.~Wahl, {\it Gaussian maps on algebraic curves}, J. Differential Geom. {\bf 32} (1990), 77--98.

\bibitem{W-CY} J.~Wahl, {\it Hyperplane sections of Calabi-Yau varieties}, 
 J. Reine Angew. Math. {\bf 544} (2002), 39--59.

\bibitem{W} J.~Wahl, {\it Introduction to Gaussian maps on an algebraic curve},
 Complex Projective Geometry, Trieste-Bergen 1989, London Math. Soc. Lecture Notes Ser. {\bf 179}.
Cambridge Univ. Press, Cambridge 1992, 304--323.

\bibitem{W2-coho} J.~Wahl, {\it On cohomology of the square of an ideal sheaf}, J. Alg. Geom. {\bf 8} (1997), 481--512.



\bibitem{W2} J.~Wahl, {\it The Jacobian algebra of a graded Gorenstein singularity}
, Duke Math. J.
{\bf 55}
(1987), 
843--871.

\bibitem{Zak} F.~L.~Zak, {\it Some properties of dual varieties and their applications in projective
geometry}. Algebraic geometry (Chicago, IL, 1989), Lecture Notes in
Math. {\bf 1479}, 273--280. Springer, Berlin, 1991

\end{thebibliography}
\end{document}